\theoremstyle{plain}\newtheorem{definition}{Definition}[section]
\theoremstyle{definition}\newtheorem{theorem}{Theorem}[section]
\theoremstyle{plain}\newtheorem{lemma}[theorem]{Lemma}
\theoremstyle{plain}\newtheorem{coro}[theorem]{Corollary}
\theoremstyle{plain}
\theoremstyle{remark}\newtheorem{remark}{Remark}[section]
\newcommand{\norm}[1]{\left\|#1\right\|}
\newcommand{\Div}{\mathrm{div}\,}
\newcommand{\B}{\Big}
\newcommand{\be}{\begin{equation}}
\newcommand{\ee}{\end{equation}}
 \newcommand{\ba}{\begin{aligned}}
 \newcommand{\ea}{\end{aligned}}
  \newcommand{\f}{\frac}
  \newcommand{\ben}{\begin{enumerate}}
   \newcommand{\een}{\end{enumerate}}
\newcommand{\ti}{\nabla}
\newcommand{\Rmnum}[1]{\expandafter\@slowromancap\romannumeral #1@}
\numberwithin{equation}{section}
\begin{document}
\title{  Energy and helicity   conservation for the generalized quasi-geostrophic equation }
\author{Yanqing Wang\footnote{   College of Mathematics and   Information Science, Zhengzhou University of Light Industry, Zhengzhou, Henan  450002,  P. R. China Email: wangyanqing20056@gmail.com},  ~   \,   Yulin Ye\footnote{School of Mathematics and Statistics,
Henan University,
Kaifeng, 475004,
P. R. China. Email: ylye@vip.henu.edu.cn} ~   \, and \, Huan Yu \footnote{ School of Applied Science, Beijing Information Science and Technology University, Beijing, 100192, P. R. China Email:  yuhuandreamer@163.com}
 }
\date{}
\maketitle
\begin{abstract}
In this paper, we consider the 2-D  generalized  surface  quasi-geostrophic  equation with the velocity $v$ determined by $v=\mathcal{R}^{\perp}\Lambda^{\gamma-1}\theta$. It is shown that
the $L^p$ type  energy norm  of weak solutions  is conserved provided $\theta\in L^{p+1}(0,T; {B}^{\f{\gamma}{3}}_{p+1, c(\mathbb{N})})$ for $0<\gamma<\f32$ or $\theta\in  L^{p+1}(0,T; {{B}}^{\alpha}_{p+1,\infty})~\text{for any}~\gamma-1<\alpha<1 \text{ with} ~\frac{3}{2}\leq \gamma <2$. Moreover, we also prove that the helicity  of weak solutions satisfying  $\nabla\theta \in L^{3}(0,T;\dot{B}_{3,c(\mathbb{N})}^{\f{\gamma}{3}})$ for $0<\gamma<\f32$ or $\nabla\theta\in  L^{3}(0,T; \dot{B}^{\alpha}_{3,\infty})~\text{for any}~\gamma-1<\alpha<1 \text{ with} ~\frac{3}{2}\leq \gamma <2$ is   invariant. Therefore, the  accurate relationships between the critical regularity for the energy (helicity) conservation of the weak solutions and the regularity of  velocity  in 2-D generalized  quasi-geostrophic equation are presented.
  \end{abstract}
\noindent {\bf MSC(2020):}\quad 35Q30, 35Q35, 76D03, 76D05\\\noindent
{\bf Keywords:}  quasi-geostrophic equation; energy conservation; helicity conservation; 
\section{Introduction}
\label{intro}
\setcounter{section}{1}\setcounter{equation}{0}
In this paper, we consider   the  2-D   generalized surface quasi-geostrophic (SQG)   equation $\text{in } ~(0,T)\times \mathbb{R}^2$ below
\be\left\{\ba\label{gqg}
&\theta_{t} + v\cdot\ti
\theta=0,\\
&v=\mathcal{R}^{\perp}\Lambda^{\gamma-1}\theta=
(-\mathcal{R}_{2}\Lambda^{\gamma-1}\theta,\mathcal{R}_{1}\Lambda^{\gamma-1}\theta),~~\gamma\in[0,2],\\
&\theta|_{t=0}=\theta_0,
\ea\right.\ee
where the unknown scalar function
$\theta(x, t)\colon \mathbb{R}^2\to \mathbb{R}$ stands for  the temperature  and $v$ is the velocity field.                      The Riesz transforms $\mathcal{R}_{j}$ are defined by $\widehat{\mathcal{R}_{j}f}=-\f{i\xi_{j}}{|\xi|}\widehat{f}(\xi)$ with $j=1,2$, where $\hat{f}(\xi)=\frac{1}{(2\pi)^{2}}\int_{\mathbb{R}^{2}}f (x)e^{-i\xi\cdot x}\,dx$.
  $\Lambda^{s}f $ is defined via
$\widehat{\Lambda^{s} f}(\xi)=|\xi|^{s}\hat{f}(\xi).$
 This model was introduced in \cite{[CCW],[CCW1],[CCCGW],[CIW]} and includes many classical hydrodynamic equations. In
particular, there hold
 	\begin{enumerate}
 		\item  \eqref{gqg}  with   $\gamma=0$  reduces to  the 2-D
vorticity equation of the incompressible Euler equations \eqref{Euler} below,
  \begin{equation}\left\{\begin{aligned}\label{Euler}
&v_{t}+v\cdot \nabla v+\nabla \Pi=0,\\ &\text{div}\, v=0,\\
&v|_{t=0}=v_{0}(x).
\end{aligned}\right.\end{equation}
	\item \eqref{gqg}  with
$\gamma=1$  is the following standard surface  quasi-geostrophic equation  \cite{[CMT]};
\be\left\{\ba\label{qg1}
&\theta_{t} +\Div( v\otimes
\theta)=0,\\
&v(x,t)=(-\mathcal{R}_2 \theta, \mathcal{R}_1 \theta ),\\
&\theta|_{t=0}=\theta_0,
\ea\right.\ee
		\item \eqref{gqg}  with  $\gamma=2$ becomes the  magneto-geostrophic equations \cite{[FV]}.
	\end{enumerate}
The generalized quasi-geostrophic   equation \eqref{gqg}
attracted a lot of attention and important progress has been made (see e.g.
\cite{[CCW],[CIW],[Kiselev],[CCCGW],[MX],[MX2],[XZ],[YZJ],[Ye]}). The goal of this paper is to examine the relationships between critical regularity for weak solutions keeping  energy (helicity) conservation and the regularity of velocity  in 2-D generalized  quasi-geostrophic equation.
A classical question involving  energy conservation in incompressible  fluid is the Onsager conjecture. In particular,
  Onsager   \cite{[Onsager]}  conjectured that the weak solutions of  incompressible  Euler equations \eqref{Euler} with
  H\"older continuity exponent $\alpha>\frac{1}{3}$ do conserve energy.
 In \cite{[CET]},
Constantin-E-Titi  successfully solved  this
   Onsager's conjecture, where it is shown that the  energy    is  conserved if a weak solution $v$ is   in the Besov space $L^{3}(0,T; B^{\alpha}_{3,\infty}(\mathbb{T}^{3}))$ with $\alpha>1/3$.  Subsequently, the result due to
 Cheskidov-Constantin-Friedlander-Shvydkoy \cite{[CCFS]}  complemented the one of
  Constantin-E-Titi  by studying in  the
   critical space $L^{3}(0,T; B^{1/3}_{3,c(\mathbb{N})})$,  where
$
B^{1/3}_{3,c(\mathbb{N})}=\{v\in B^{1/3}_{3,\infty}, \lim_{q\rightarrow\infty}2^{q}\|\Delta_{q}v\|^{3}_{L^{3}}=0\}$ and  $\Delta_{q}$ stands for a smooth restriction of $v$ into Fourier modes of order $2^q$.  The spaces $B^{1/3}_{3,c(\mathbb{N})}$ is usually called as the Onsager's    critical  spaces.
 Along this direction, there are some progresses recently, one can refer to \cite{[FW2018],[BGSTW]} for details.

 We turn our attention back to the persistence of energy in surface quasi-geostrophic equation.
A parallel of Constantin-E-Titi's result for the  2-D standard surface quasi-geostrophic equation \eqref{qg1} was obtained by Zhou  in \cite{[Zhou]}, where he showed that the $L^2$ type energy norm of weak solutions is conserved provided $\theta \in L^3(0,T;B^\alpha _{3,\infty})$ with $\alpha >\frac{1}{3}$.   Chae \cite{[Chae]} proved that the $L^{p}$ type energy norm of $\theta$ is preserved if the weak solution $(\theta,v)$ satisfy
\be\label{chae}
v \in  L^{r_1}(0,T;\dot{B}^{\alpha}_{p+1,\infty})~  \text{and}~   \theta\in  L^{r_2}(0,T;{B}^{\alpha}_{p+1,\infty}),\frac{1}{r_1}+\frac{p}{r_2}=1, \alpha>\f13.
\ee
Very recently,
Akramova-Wiedemann \cite{[AW]}   present the  following  sufficient conditions  implying    $L^{p}$ norm conservation
$$\theta\in  L^{p_1}(0,T;\dot{B}^{\alpha}_{3,\infty}), ~\alpha>\f13,~ p_1\leq\f{6}{2-3\alpha},$$
  for    a weak solution for the  2-D  standard surface
quasi-geostrophic equation \eqref{qg1}. We note that all above results are in Onsager's subcritical space other than the Onsager's critical space, which means the regularity of space is required to satisfy  $\alpha >\frac{1}{3}$ not $\alpha =\frac{1}{3}$ exactly. Hence, our first objective is to show the regularity criterion for the energy conservation of weak solutions of 2-D generalized surface quasi-geostrophic equation \eqref{gqg} in Onsager's critical space. Now, we formulate our first  result as follows.
\begin{theorem}\label{the1.1}
Let $p\in [2,\infty)$ and $\theta\in C([0,T]; L^{p}(\mathbb{R}^{2}))$ is  a  weak solution of the 2-D generalized surface quasi-geostrophic   equation   \eqref{gqg}  in the sense of Definition \ref{qgdefi},
then the $L^{p}$ type energy norm of $\theta$ is preserved, that is, for any $t\in [0,T]$,
$$\|\theta(x,t)\|_{L^{p}(\mathbb{R}^{2})}=\|\theta(x,0)\|_{L^{p}(\mathbb{R}^{2})},$$
provided one of the following conditions is satisfied
\begin{equation}\label{1.5}
	\theta\in  L^{p+1}(0,T; {{B}}^{\frac{\gamma}{3}}_{p+1,c(\mathbb{N})})~\text{with}~0<\gamma <\frac{3}{2};
\end{equation}
	\text{or}
	\begin{equation}\label{1.6}\theta\in  L^{p+1}(0,T; {{B}}^{\alpha}_{p+1,\infty})~\text{for any}~\gamma-1<\alpha<1 \text{ with} ~\frac{3}{2}\leq \gamma <2.\end{equation}

\end{theorem}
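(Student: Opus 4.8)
The plan is to adapt the Constantin--E--Titi mollification scheme to the $L^p$ setting, exploiting the divergence-free structure of $v=\mathcal{R}^\perp\Lambda^{\gamma-1}\theta$ to cancel the borderline contribution and reduce everything to a single symmetric commutator. Write $\beta(s)=|s|^p$ and let $\theta^\varepsilon=\theta*\eta_\varepsilon$ denote spatial mollification. Mollifying \eqref{gqg} and using $\mathrm{div}\,v=0$ gives $\partial_t\theta^\varepsilon+v\cdot\nabla\theta^\varepsilon=-\mathrm{div}\,K^\varepsilon$ with $K^\varepsilon=(v\theta)^\varepsilon-v\theta^\varepsilon$. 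Multiplying by $\beta'(\theta^\varepsilon)$, integrating over $\mathbb{R}^2$, and noting that the transport term satisfies $\int v\cdot\nabla\beta(\theta^\varepsilon)=0$ (again by $\mathrm{div}\,v=0$), I obtain the flux identity $\frac{d}{dt}\int\beta(\theta^\varepsilon)=\int\nabla\beta'(\theta^\varepsilon)\cdot K^\varepsilon$. The whole problem is thus to show that the time integral of the right-hand side tends to $0$ as $\varepsilon\to0$.

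Second, I decompose $K^\varepsilon=R^\varepsilon(v,\theta)-(v-v^\varepsilon)\theta^\varepsilon$, where $R^\varepsilon(v,\theta)=(v\theta)^\varepsilon-v^\varepsilon\theta^\varepsilon$ is the symmetric commutator. The crucial observation is that the second, less regular piece drops out: since $\theta^\varepsilon\nabla\beta'(\theta^\varepsilon)=\nabla H(\theta^\varepsilon)$ with $H(s)=(p-1)|s|^p$ (as $H'(s)=s\beta''(s)$), an integration by parts gives $\int\nabla\beta'(\theta^\varepsilon)\cdot(v-v^\varepsilon)\theta^\varepsilon=-\int H(\theta^\varepsilon)\,\mathrm{div}(v-v^\varepsilon)=0$, because $\mathrm{div}\,v=\mathrm{div}\,v^\varepsilon=0$. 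Hence the flux reduces to $\int\nabla\beta'(\theta^\varepsilon)\cdot R^\varepsilon(v,\theta)$, involving only the good commutator. This cancellation, which is exactly what makes the critical exponent $\gamma/3$ reachable, is the step I expect to be most delicate to formalize.

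Third is the scaling estimate. Since $v=\mathcal{R}^\perp\Lambda^{\gamma-1}\theta$ loses $\gamma-1$ derivatives, $\theta\in B^\alpha_{p+1,\infty}$ yields $v\in B^{\alpha-\gamma+1}_{p+1,\infty}$ with $\|v\|_{B^{\alpha-\gamma+1}_{p+1}}\lesssim\|\theta\|_{B^\alpha_{p+1}}$; the conditions $0<\gamma<\tfrac32,\ \alpha=\tfrac{\gamma}{3}$ and $\gamma-1<\alpha<1$ are precisely those guaranteeing $\alpha-\gamma+1\in(0,1)$ and $\alpha\in(0,1)$, so a Constantin--E--Titi commutator estimate applies and gives $\|R^\varepsilon(v,\theta)\|_{L^{(p+1)/2}}\lesssim\varepsilon^{2\alpha-\gamma+1}\|\theta\|_{B^\alpha_{p+1}}^2$. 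Writing $\nabla\beta'(\theta^\varepsilon)=p(p-1)|\theta^\varepsilon|^{p-2}\nabla\theta^\varepsilon$ and applying H\"older with exponents $\tfrac{p+1}{p-2},\ p+1,\ \tfrac{p+1}{2}$ (here $p\ge2$ is used so that $\beta$ is $C^2$ and $|\theta^\varepsilon|^{p-2}\in L^{(p+1)/(p-2)}$, the factor being absent for $p=2$), together with $\|\nabla\theta^\varepsilon\|_{L^{p+1}}\lesssim\varepsilon^{\alpha-1}\|\theta\|_{B^\alpha_{p+1}}$, yields the pointwise-in-time bound $|\mathrm{flux}|\lesssim\varepsilon^{3\alpha-\gamma}\|\theta\|_{L^{p+1}}^{p-2}\|\theta\|_{B^\alpha_{p+1}}^{3}$. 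A further H\"older in time with exponents $\tfrac{p+1}{p-2}$ and $\tfrac{p+1}{3}$ bounds the time integral by $\varepsilon^{3\alpha-\gamma}\|\theta\|_{L^{p+1}(L^{p+1})}^{p-2}\|\theta\|_{L^{p+1}(B^\alpha_{p+1})}^{3}$, which is finite under either hypothesis.

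Finally I treat the two regimes. In the range $\tfrac32\le\gamma<2$ with $\gamma-1<\alpha<1$ one checks $3\alpha-\gamma>0$ (since $\alpha>\gamma-1\ge\gamma/3$), so the bound carries a strictly positive power of $\varepsilon$ and the flux integral vanishes as $\varepsilon\to0$, giving \eqref{1.6} in the merely $B^\alpha_{p+1,\infty}$ topology. In the range $0<\gamma<\tfrac32$ the exponent $\alpha=\gamma/3$ makes $3\alpha-\gamma=0$, so the crude bound is only $O(1)$ and I must instead run the estimate through a Littlewood--Paley decomposition at frequency $\varepsilon^{-1}\sim2^N$: the commutator and the gradient concentrate on frequencies $\gtrsim2^N$, so the flux is controlled by $\sup_{q\ge N}\big(2^{q\gamma/3}\|\Delta_q\theta\|_{L^{p+1}}\big)$ times the finite quantities above, and the defining little-o property of $B^{\gamma/3}_{p+1,c(\mathbb{N})}$ forces this supremum to $0$ as $N\to\infty$. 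The genuine obstacle is therefore this critical endpoint: one must avoid the lossy mollification bound and extract the extra decay from the $c(\mathbb{N})$ condition, exactly as in Cheskidov--Constantin--Friedlander--Shvydkoy. Passing $\varepsilon\to0$ in the flux identity and using $\theta\in C([0,T];L^p)$ then yields $\|\theta(t)\|_{L^p}=\|\theta(0)\|_{L^p}$.
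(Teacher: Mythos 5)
Your proposal is correct and coincides in substance with the paper's second proof (mollification plus Constantin--E--Titi commutator estimates): your reduction to the symmetric commutator $R^\varepsilon(v,\theta)=(v\theta)^\varepsilon-v^\varepsilon\theta^\varepsilon$ via the divergence-free cancellation, the H\"older splitting with exponents $\tfrac{p+1}{p-2},\,p+1,\,\tfrac{p+1}{2}$, and the exponent count $3\alpha-\gamma$ are exactly the paper's \eqref{e4.3}--\eqref{e4.6}. The only point you flag as delicate --- upgrading the $O(\varepsilon^{3\alpha-\gamma})$ bound to $o(1)$ at the critical exponent $\alpha=\gamma/3$ --- is precisely what the paper's Lemmas \ref{lem2.2} and \ref{lem2.3} already deliver under the $c(\mathbb{N})$ hypothesis, so no separate Littlewood--Paley detour is needed there.
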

\begin{remark}
An analogue of Cheskidov-Constantin-Friedlander-Shvydkoy' s theorem \cite{[CCFS]} is established for the 2-D generalized surface quasi-geostrophic   equation \eqref{gqg}. Even though  for the standard 2-D surface quasi-geostrophic    equation \eqref{qg1},  a special case of this theorem with $p=2$ and $\gamma =1$ is novel and improves the corresponding result  in  \cite{[Zhou]}.
This theorem reveals how the regularity of the velocity  field influences the critical regularity of the weak solutions preserving the energy in generalized surface quasi-geostrophic   equation \eqref{gqg}.
\end{remark}
\begin{remark}
As pointed in \cite{[CCCGW]}, the situation in model  \eqref{gqg}    for  $1<\gamma\leq2$ is more singular than the classical quasi-geostrophic   equation \eqref{qg1}, hence we only get the subcritical criterion for energy conservation for $\frac{3}{2}\leq\gamma<2$.   It is an interesting problem to study the  persistence of energy  in \eqref{gqg}   in Onsager's critical space for the case $\frac{3}{2}\leq \gamma \leq 2$.
\end{remark}
Moreover, when $p=2$, the condition $\theta\in L^{3}(0,T;L^{3}(\mathbb{R}^{2}))$ in  Theorem \ref{the1.1} can be removed. Precisely, we have
\begin{coro}\label{coro1.1}
Let  $0<\gamma <\frac{3}{2}$.
  Assume that  $\theta\in C([0,T];L^2(\mathbb{R}^2))$ is  a  weak solution of the 2D  quasi-geostrophic   equation   \eqref{qg1}  in the sense of Definition \ref{qgdefi}    satisfying $ \theta\in  L^{3}(0,T;\dot{B}^{\frac{\gamma}{3}}_{3,c(\mathbb{N})}),$
then the $L^{2}$ type energy norm of $\theta$ is preserved, that is, for any $t\in [0,T]$,
$$\|\theta(x,t)\|_{L^{2}(\mathbb{R}^{2})}=\|\theta(x,0)\|_{L^{2}(\mathbb{R}^{2})}.$$
\end{coro}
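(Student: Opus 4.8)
The plan is to obtain Corollary \ref{coro1.1} as a direct consequence of Theorem \ref{the1.1} with $p=2$. The only discrepancy between the two statements is that Theorem \ref{the1.1} requires membership in the \emph{inhomogeneous} space $L^{3}(0,T;B^{\gamma/3}_{3,c(\mathbb N)})$, whereas here we are only given $\theta\in C([0,T];L^2)\cap L^{3}(0,T;\dot B^{\gamma/3}_{3,c(\mathbb N)})$. Since $\gamma/3>0$, for each fixed $t$ one has the norm equivalence $\|\theta\|_{B^{\gamma/3}_{3,c(\mathbb N)}}\approx\|\theta\|_{L^3}+\|\theta\|_{\dot B^{\gamma/3}_{3,c(\mathbb N)}}$, and the defining condition $\lim_{q\to\infty}2^{q\gamma/3}\|\Delta_q\theta\|_{L^3}=0$ concerns high frequencies only, so it is the same in the homogeneous and inhomogeneous settings. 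Hence it suffices to upgrade the hypotheses to $\theta\in L^{3}(0,T;L^3)$, after which the inhomogeneous hypothesis of Theorem \ref{the1.1} holds and the conclusion is immediate.

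The extra time--space integrability will come from a Gagliardo--Nirenberg interpolation between $L^2$ and $\dot B^{\gamma/3}_{3,\infty}$. First I would establish, for $f$ on $\mathbb R^2$ in the relevant class,
\begin{equation}\label{cor-gn}
\|f\|_{L^3}\le C\,\|f\|_{L^2}^{1-a}\,\|f\|_{\dot B^{\gamma/3}_{3,\infty}}^{a},\qquad a=\frac{1}{1+\gamma}\in(0,1),
\end{equation}
the exponent $a$ being forced by scaling on $\mathbb R^2$. To prove \eqref{cor-gn} I would split $f=S_Nf+(I-S_N)f$ at frequency $2^N$, with $S_N=\sum_{j<N}\Delta_j$. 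For the low part, Bernstein's inequality yields $\|\Delta_jf\|_{L^3}\lesssim 2^{j/3}\|\Delta_jf\|_{L^2}$, hence $\|S_Nf\|_{L^3}\lesssim 2^{N/3}\|f\|_{L^2}$; for the high part, $\|\Delta_jf\|_{L^3}\le 2^{-j\gamma/3}\|f\|_{\dot B^{\gamma/3}_{3,\infty}}$, hence $\|(I-S_N)f\|_{L^3}\lesssim 2^{-N\gamma/3}\|f\|_{\dot B^{\gamma/3}_{3,\infty}}$, both geometric series converging precisely because $\gamma>0$. Balancing $2^{N/3}\|f\|_{L^2}\sim 2^{-N\gamma/3}\|f\|_{\dot B^{\gamma/3}_{3,\infty}}$ over $N$ gives \eqref{cor-gn}.

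With \eqref{cor-gn} in hand I would cube, integrate in time, and estimate the time factor by H\"older:
\begin{equation}\label{cor-time}
\int_0^T\|\theta\|_{L^3}^{3}\,dt\le C\,\|\theta\|_{L^\infty_tL^2}^{3(1-a)}\int_0^T\|\theta\|_{\dot B^{\gamma/3}_{3,\infty}}^{3a}\,dt\le C\,T^{1-a}\,\|\theta\|_{L^\infty_tL^2}^{3(1-a)}\,\|\theta\|_{L^3_t\dot B^{\gamma/3}_{3,\infty}}^{3a},
\end{equation}
the last step using H\"older in $t$ with exponents $1/a$ and $1/(1-a)$ on the finite interval $(0,T)$, which is legitimate exactly because $a<1$, i.e. $3a<3$. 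Since $\theta\in C([0,T];L^2)\subset L^\infty_tL^2$ and $\dot B^{\gamma/3}_{3,c(\mathbb N)}\hookrightarrow\dot B^{\gamma/3}_{3,\infty}$, the right side of \eqref{cor-time} is finite, so $\theta\in L^{3}(0,T;L^3)$. Combined with the hypothesis $\theta\in L^{3}(0,T;\dot B^{\gamma/3}_{3,c(\mathbb N)})$ and the norm equivalence above, this places $\theta\in L^{3}(0,T;B^{\gamma/3}_{3,c(\mathbb N)})$, so Theorem \ref{the1.1} with $p=2$ applies and yields the conservation of $\|\theta(t)\|_{L^2}$.

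The one delicate point is the inequality $a=\frac{1}{1+\gamma}<1$: this is what lets the $L^3_t$ integrability survive the interpolation in \eqref{cor-time}, and it degenerates as $\gamma\to 0$. This is the structural reason the $L^3(0,T;L^3)$ hypothesis can be dispensed with precisely in the stated range $0<\gamma<\frac32$ (in fact for every $\gamma>0$), and I would expect no further obstacle once \eqref{cor-gn} is verified.
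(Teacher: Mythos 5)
Your proof is correct, but it takes a genuinely different route from the paper's. The paper does not invoke Theorem \ref{the1.1} as a black box; it reruns the $p=2$ case of that proof, starting from the identity
\begin{equation*}
\frac{1}{2}\frac{d}{dt}\int_{\mathbb{R}^{2}}|S_{N}\theta|^{2}\,dx=\int_{\mathbb{R}^{2}}\big[S_{N}(v_{j}\theta)-S_{N}v_{j}S_{N}\theta\big]\partial_{j}S_{N}\theta\,dx,
\end{equation*}
and observes that the factor $\||S_{N}\theta|^{p-2}\|_{L^{(p+1)/(p-2)}}=\|S_{N}\theta\|_{L^{p+1}}^{p-2}$ appearing in the H\"older step \eqref{key03} --- the only place where the Lebesgue part of the inhomogeneous Besov norm of $\theta$ enters --- is identically $1$ when $p=2$, so every estimate closes using the homogeneous seminorm $\|\theta\|_{\dot{B}^{\gamma/3}_{3,c(\mathbb{N})}}$ alone and the $L^{3}_{t,x}$ information is simply never used. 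You instead keep Theorem \ref{the1.1} intact and upgrade the hypotheses: the scale-correct interpolation $\|f\|_{L^{3}}\lesssim\|f\|_{L^{2}}^{\gamma/(1+\gamma)}\|f\|_{\dot{B}^{\gamma/3}_{3,\infty}}^{1/(1+\gamma)}$ (your frequency-splitting proof of it is sound, the exponent matches the scaling on $\mathbb{R}^{2}$, and $a=\tfrac{1}{1+\gamma}<1$ makes the H\"older step in time legitimate) shows that $C([0,T];L^{2})\cap L^{3}(0,T;\dot{B}^{\gamma/3}_{3,c(\mathbb{N})})$ already embeds into $L^{3}(0,T;B^{\gamma/3}_{3,c(\mathbb{N})})$, after which Theorem \ref{the1.1} with $p=2$ applies verbatim. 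Both arguments are valid. The paper's exposes the structural reason the corollary holds (the exponent $p-2$ vanishes) and would survive in settings where such an interpolation is less convenient; yours is shorter modulo Theorem \ref{the1.1} and in fact proves the slightly stronger assertion that for $p=2$ the corollary's hypotheses are not weaker than the theorem's, so the $L^{3}(0,T;L^{3})$ assumption is not so much removed as shown to be redundant.
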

Inspired by the  persistence of energy  criterion \eqref{chae}, we have
\begin{theorem}\label{the1.2}
Let $p\in [2,\infty)$ and $r_1\in [1,\infty], r_2\in [p,\infty]$ be given, satisfying $\frac{1}{r_1}+\frac{p}{r_2}=1$. Assume that  $\theta\in C([0,T];L^{p}(\mathbb{R}^{2}))$ is  a  weak solution of the 2-D   SQG equation \eqref{gqg}  in the sense of Definition \ref{qgdefi} with $v \in  L^{r_1}(0,T;\dot {B}^{\frac{1}{3}}_{p+1,c(\mathbb{N})})$ and $ \theta\in  L^{r_2}(0,T;{B}^{\frac{1}{3}}_{p+1,\infty})$,
then the $L^{p}$ type energy norm of $\theta$ is preserved, that is, for any $t\in [0,T]$,
$$\|\theta(t)\|_{L^{p}(\mathbb{R}^{2})}=\|\theta(0)\|_{L^{p}(\mathbb{R}^{2})}.$$
\end{theorem}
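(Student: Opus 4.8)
The plan is to run the Constantin--E--Titi mollification scheme adapted to the $L^p$ functional $F(\theta)=\tfrac1p|\theta|^p$. Fix a standard spatial mollifier $\eta_\epsilon$ and set $\theta^\epsilon=\theta*\eta_\epsilon$. Convolving \eqref{gqg} gives $\partial_t\theta^\epsilon+(v\cdot\nabla\theta)^\epsilon=0$, and since $\theta^\epsilon$ is smooth in $x$ I would multiply by $F'(\theta^\epsilon)=|\theta^\epsilon|^{p-2}\theta^\epsilon$ and integrate over $\mathbb{R}^2$ to obtain
\[
\frac{d}{dt}\int_{\mathbb{R}^2}F(\theta^\epsilon)\,dx=-\int_{\mathbb{R}^2}F'(\theta^\epsilon)\,(v\cdot\nabla\theta)^\epsilon\,dx .
\]
Writing $(v\cdot\nabla\theta)^\epsilon=v\cdot\nabla\theta^\epsilon+R^\epsilon$ with $R^\epsilon:=(v\cdot\nabla\theta)^\epsilon-v\cdot\nabla\theta^\epsilon$, and using $\operatorname{div}v=0$ to see $\int F'(\theta^\epsilon)\,v\cdot\nabla\theta^\epsilon=\int v\cdot\nabla F(\theta^\epsilon)=0$, the evolution of the $L^p$ norm is governed entirely by the commutator flux $\int F'(\theta^\epsilon)R^\epsilon\,dx$.

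The next step is an algebraic reduction of this flux. Because $\operatorname{div}v=0$, one has $R^\epsilon=\operatorname{div}\big[(v\theta)^\epsilon-v\theta^\epsilon\big]$, so integrating by parts gives $\int F'(\theta^\epsilon)R^\epsilon=-\int\nabla F'(\theta^\epsilon)\cdot\big[(v\theta)^\epsilon-v\theta^\epsilon\big]$. I would then split $(v\theta)^\epsilon-v\theta^\epsilon=\tau^\epsilon-(v-v^\epsilon)\theta^\epsilon$ with the Constantin--E--Titi commutator $\tau^\epsilon:=(v\theta)^\epsilon-v^\epsilon\theta^\epsilon$. The contribution of $(v-v^\epsilon)\theta^\epsilon$ vanishes identically: since $\nabla F'(\theta^\epsilon)\,\theta^\epsilon=\tfrac{p-1}{p}\nabla(|\theta^\epsilon|^p)$, one more integration by parts together with $\operatorname{div}(v-v^\epsilon)=0$ kills it. This leaves the single term
\[
\int_{\mathbb{R}^2}F'(\theta^\epsilon)R^\epsilon\,dx=-(p-1)\int_{\mathbb{R}^2}|\theta^\epsilon|^{p-2}\,\nabla\theta^\epsilon\cdot\tau^\epsilon\,dx ,
\]
and this divergence-free bookkeeping is exactly what makes the general $L^p$ case tractable.

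For the estimate I would use the three-factor H\"older split with exponents $\tfrac{p-1}{p+1}+\tfrac{2}{p+1}=1$, bounding $\||\theta^\epsilon|^{p-2}\nabla\theta^\epsilon\|_{L^{(p+1)/(p-1)}}\le\|\theta^\epsilon\|_{L^{p+1}}^{p-2}\|\nabla\theta^\epsilon\|_{L^{p+1}}$ against $\|\tau^\epsilon\|_{L^{(p+1)/2}}$. Using the representation $\tau^\epsilon(x)=\int\eta_\epsilon(y)\,\delta_yv\,\delta_y\theta\,dy-(v-v^\epsilon)(\theta-\theta^\epsilon)$ and the first-difference characterization of Besov spaces yields $\|\tau^\epsilon\|_{L^{(p+1)/2}}\lesssim\epsilon^{2/3}\|v\|_{\dot B^{1/3}_{p+1,\infty}}\|\theta\|_{\dot B^{1/3}_{p+1,\infty}}$, while a Bernstein estimate gives $\|\nabla\theta^\epsilon\|_{L^{p+1}}\lesssim\epsilon^{-2/3}\|\theta\|_{B^{1/3}_{p+1,\infty}}$. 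The two powers of $\epsilon$ cancel, so the spatial flux is controlled by $\|\theta\|_{L^{p+1}}^{p-2}\|\theta\|_{B^{1/3}_{p+1,\infty}}^2\|v\|_{\dot B^{1/3}_{p+1,\infty}}$; integrating in time by H\"older with the relation $\tfrac1{r_1}+\tfrac{p}{r_2}=1$ then bounds the full flux uniformly in $\epsilon$ by $\|v\|_{L^{r_1}\dot B^{1/3}_{p+1,\infty}}\|\theta\|_{L^{r_2}B^{1/3}_{p+1,\infty}}^p$.

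The main obstacle is precisely that this bound is of order $\epsilon^{0}$: since $1/3$ is the Onsager-critical exponent, the naive commutator estimate does not decay and cannot by itself give conservation. To extract the missing smallness I would exploit the $c(\mathbb{N})$ hypothesis carried by $v$ alone. Following the Cheskidov--Constantin--Friedlander--Shvydkoy mechanism, I would split $v=v_{\le N}+v_{>N}$ at a fixed dyadic threshold $N$: for each fixed $N$ the band-limited part $v_{\le N}$ produces a genuinely subcritical commutator that vanishes as $\epsilon\to0$, while the remainder is estimated uniformly in $\epsilon$ by the critical tail $\sup_{q>N}2^{q/3}\|\Delta_qv\|_{L^{p+1}}$, which tends to $0$ as $N\to\infty$ by the definition of $\dot B^{1/3}_{p+1,c(\mathbb{N})}$ (here $\theta$ is used only through its bounded $B^{1/3}_{p+1,\infty}$ norm). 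Letting $\epsilon\to0$ and then $N\to\infty$, with dominated convergence in time, forces $\int_0^T\!\!\int F'(\theta^\epsilon)R^\epsilon\,dx\,dt\to0$. Finally, since $\theta\in C([0,T];L^p)$ gives $\theta^\epsilon(t)\to\theta(t)$ in $L^p$ uniformly in $t$, integrating the energy identity and passing to the limit yields $\|\theta(t)\|_{L^p}=\|\theta(0)\|_{L^p}$. The delicate point throughout is to arrange that the critical smallness is supplied by $v$ while $\theta$ enters only via a fixed, finite norm.
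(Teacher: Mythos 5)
Your proposal is correct and follows essentially the same route as the paper's proof: mollify, use incompressibility to reduce the $L^p$ flux to the Constantin--E--Titi commutator $(v\theta)^{\varepsilon}-v^{\varepsilon}\theta^{\varepsilon}$ tested against $|\theta^{\varepsilon}|^{p-2}\nabla\theta^{\varepsilon}$, apply the three-factor H\"older split in space and time with $\frac{1}{r_1}+\frac{p}{r_2}=1$, and let the $c(\mathbb{N})$ hypothesis on $v$ upgrade the critical $O(\varepsilon^{2/3})$ commutator bound to $o(\varepsilon^{2/3})$ so that it beats $\|\nabla\theta^{\varepsilon}\|_{L^{p+1}}=O(\varepsilon^{-2/3})$. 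The only difference is that you re-derive this smallness by an explicit CCFS-style frequency splitting of $v$, whereas the paper simply invokes its Lemma \ref{lem2.3}, whose proof is exactly that splitting.
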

\begin{remark}
	A slightly modified the proof of this theorem means that the same result also holds if $v \in  L^{r_1}(0,T;\dot{B}^{\frac{1}{3}}_{p+1,\infty})$ and $ \theta \in L^{r_2}(0,T;{B}^{\frac{1}{3}}_{p+1,c(\mathbb{N}) })$.  This  and Theorem \ref{the1.2} refine the    criterion \eqref{chae}
\end{remark}
\begin{remark}
Owing to the boundedness of Riesz transforms in homogeneous Besov spaces,
	  Theorem \ref{the1.2} guarantees that   the $L^2$ type  energy norm   of weak solutions  satisfying
$ \theta\in  L^{3}(0,T;\dot{B}^{\frac{1}{3}}_{3,c(\mathbb{N})})$
are preserved in the standard   quasi-geostrophic   equation \eqref{qg1}.
\end{remark}
We would like to mention that  Dai \cite{[Dai]}  showed that the energy of any viscosity solution of the     quasi-geostrophic equation \eqref{qg1} with supercritical   dissipation $\Lambda^{\alpha}\theta$
satisfying  $\theta \in L^{2}(0,T;B_{2,c(\mathbb{N})}^{\f{1}{2}})$  is   invariant.

Beside the  persistence of energy of weak solutions in   the quasi-geostrophic   equation \eqref{qg1}, the general helicity defined as
$$
\int  \theta \partial_{i}\theta   dx, i=1,2,
$$
is  also conserved,  which is observed by Zhou  in \cite{[Zhou]}. It is shown that  the general helicity of weak solutions for the 2-D standard surface quasi-geostrophic equation \eqref{qg1} is conserved  if
$\nabla\theta \in C([0,T];L^{\f43}(\mathbb{R}^{2}))\cap L^{3}(0,T;B^{\alpha}_{\f32,\infty})$ with $\alpha>1/3$ in \cite{[Zhou]}. Recently, the authors \cite{[WWY]} improves this  to $\nabla\theta \in C([0,T];L^{\f43}(\mathbb{R}^{2}))\cap L^{3}(0,T;\dot{B}^{\frac{1}{3}}_{\f32,c(\mathbb{N})})$.
The helicity  of flow was    originated from  Moffatt's work     \cite{[Moffatt]}. The helicity is important at a fundamental level in relation to flow kinematics
because it admits topological interpretation in relation to the linkage or
linkages of vortex lines of the flow (see \cite{[Moffatt],[MT]}). See \cite{[Chae],[Chae1],[De Rosa],[WWY]} for the study of the
helicity conservation of weak solutions of the Euler equations.
Now  we state our rest results involving helicity conservation of weak solutions for the generalized    quasi-geostrophic equation \eqref{gqg} as follows:

\begin{theorem}\label{the1.3} Let  $\theta$ be a  weak solution of the 2-D generalized quasi-geostrophic equation  \eqref{gqg} in the sense of Definition \ref{qgdefi}, then the helicity  conservation
\be\label{ghqg}
\int_{\mathbb{R}^{2}}   \theta(x,t) \partial_{i}\theta(x,t)   dx =\int_{\mathbb{R}^{2}}  \theta_0(x) \partial_{i}\theta_0 (x) dx, i=1,2, \ee
 is valid provided one of the following conditions is
satisfied
 \begin{enumerate}[(1)] \item
  $ \nabla\theta \in L^{3} (0,T;\dot{B}^{\f\gamma3}_{\f32,c(\mathbb{N})} (\mathbb{R}^{2}))\cap C([0,T];L^{\frac{4}{3}}(\mathbb{R}^{2})) $ ~with~$0<\gamma <\frac{3}{2};
$	 \item
$\nabla\theta\in  L^{3}(0,T; \dot{B}^{\alpha}_{\f32,\infty}(\mathbb{R}^{2}))\cap C([0,T];L^{\frac{4}{3}}(\mathbb{R}^{2}))$~ for any ~$\gamma-1<\alpha<1$  with  $\frac{3}{2}\leq \gamma <2.$
 \end{enumerate}
\end{theorem}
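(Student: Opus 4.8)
**The plan is to adapt the commutator/mollification technique used for the energy conservation (Theorem \ref{the1.1}) to the helicity functional.**The plan is to establish the helicity balance \eqref{ghqg} by the same Constantin--E--Titi regularization used for Theorem \ref{the1.1}, but carried out one derivative higher so that the natural object is $\nabla\theta$ rather than $\theta$. Formally, differentiating \eqref{gqg} gives $\partial_t(\partial_i\theta)+v\cdot\nabla(\partial_i\theta)+\partial_i v\cdot\nabla\theta=0$; combining the $\theta$-equation tested against $\partial_i\theta$ with this differentiated equation tested against $\theta$, and using $\mathrm{div}\,v=0$, one finds that $\frac{d}{dt}\int_{\mathbb{R}^2}\theta\,\partial_i\theta\,dx$ reduces to the single \emph{helicity flux} $J_i:=\int_{\mathbb{R}^2}\theta\,\partial_i v\cdot\nabla\theta\,dx$. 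Since $\theta\,\nabla\theta=\tfrac12\nabla\theta^2$ and $\partial_i(\mathrm{div}\,v)=0$, this flux vanishes identically for smooth fields; the whole task is therefore to prove that $J_i$ is well defined and equal to zero for the weak solution under the stated regularity, and that the endpoint values are attained. The assumption $\nabla\theta\in C([0,T];L^{4/3})$ is what makes $\theta\,\partial_i\theta=\tfrac12\partial_i\theta^2\in L^1$ (via $\theta^2\in W^{1,1}$) and supplies the time-continuity needed to pass from the differentiated identity to the pointwise-in-time equality \eqref{ghqg}.

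First I would mollify in space, $\theta^\epsilon=\theta*\rho_\epsilon$, convolve \eqref{gqg} to get $\partial_t\theta^\epsilon+v\cdot\nabla\theta^\epsilon=R^\epsilon$ with the commutator $R^\epsilon=v\cdot\nabla\theta^\epsilon-(v\cdot\nabla\theta)^\epsilon$, and run the above computation at the regularized level. The transport contributions recombine into a perfect $\partial_i$-derivative and drop out, so that the only surviving term is a regularized flux $J_i^\epsilon$ trilinear in increments of $\nabla\theta$ and of the velocity $v=\mathcal{R}^\perp\Lambda^{\gamma-1}\theta$. The decisive step is then a commutator estimate in the spirit of \cite{[CET],[CCFS]}: writing $\partial_i v=\Lambda^{\gamma-1}\mathcal{R}^\perp\partial_i\theta$ and invoking the boundedness of $\mathcal{R}^\perp$ and of $\Lambda^{\gamma-1}$ on homogeneous Besov spaces, one transfers the regularity of $\nabla\theta$ onto $v$ and bounds $J_i^\epsilon$ by a product of three increment norms of $\nabla\theta$, each controlled in $\dot B^{\gamma/3}_{3/2}$. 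A scaling count gives $|J_i^\epsilon|\lesssim \epsilon^{\,3(\gamma/3)-\gamma}\cdot(\text{Besov norms})=O(1)$ in $L^1_t$, which is exactly the critical balance.

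This calibration explains both hypotheses. For $0<\gamma<3/2$ the critical index $\gamma/3$ is admissible, and the little-$o$ condition encoded in $\dot B^{\gamma/3}_{3/2,c(\mathbb{N})}$ upgrades the borderline $O(1)$ bound to $o(1)$, forcing $\int_0^t J_i^\epsilon\,ds\to 0$ and hence $J_i\equiv 0$. For $3/2\leq\gamma<2$ the velocity $v$ carries $\gamma-1\geq\tfrac12$ extra derivatives and is too singular for the commutator to close at the critical index; one must instead spend a subcritical smoothness $\alpha>\gamma-1$, for which $3\alpha-\gamma>0$ (note $\alpha>\gamma-1\geq\gamma/3$ when $\gamma\geq 3/2$) and the flux is $O(\epsilon^{\,3\alpha-\gamma})=o(1)$ directly, the upper restriction $\alpha<1$ reflecting that $\nabla\theta$ is being measured in $\dot B^\alpha$. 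I expect the main obstacle to be precisely this nonlocal, high-order factor $\partial_i v=\Lambda^{\gamma-1}\mathcal{R}^\perp\nabla\theta$ in $J_i$: controlling it requires quantifying the cancellation coming from $\mathrm{div}\,v=0$ at the level of increments, rather than a naive H\"older splitting, and it is this cancellation that must be tracked through the mollification in order to reach the sharp thresholds in both $\gamma$-regimes.
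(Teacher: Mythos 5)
Your proposal is correct and follows essentially the same route as the paper's second proof of Theorem \ref{the1.3}: mollify, differentiate, use $\Div v=0$ to reduce everything to Constantin--E--Titi commutators, transfer regularity from $\nabla\theta$ to $\nabla v=\mathcal{R}^{\perp}\Lambda^{\gamma-1}\nabla\theta$ via Riesz/Besov boundedness, and close with the trilinear bound $o(\varepsilon^{3\alpha-\gamma})$, with the $c(\mathbb{N})$ refinement supplying the $o(1)$ at the critical index $\alpha=\gamma/3$ and the subcritical window $\gamma-1<\alpha<1$ handling $\tfrac32\le\gamma<2$. The only cosmetic difference is that the divergence-free cancellation is needed just to annihilate the exact term $\int\partial_j(\partial_i v_j^{\varepsilon}\theta^{\varepsilon})\theta^{\varepsilon}\,dx$, not inside the increment estimates as you anticipated.
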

\begin{remark}
	By the Bernstein inequality,    $ \nabla\theta \in L^{3} (0,T;\dot{B}^{\f\gamma3}_{\f32,c(\mathbb{N})}(\mathbb{R}^{2}) ) $ may be replaced by $  \theta \in L^{3} (0,T;\dot{B}^{\f{3+\gamma}{3}}_{\f32,c(\mathbb{N})} (\mathbb{R}^{2})) $ in this theorem.
\end{remark}
\begin{remark}
	The required regularity $\nabla \theta \in C([0,T];L^{\frac{4}{3}}(\mathbb{R}^{2}))$ is used to ensure the helicity conservation \eqref{ghqg} make sense.
\end{remark}
\begin{remark}
Compare the results in Theorem \ref{the1.3} and Corollary \ref{coro1.1}, we see that there may exist   a weak solution of the 2-D generalized surface quasi-geostrophic equation \eqref{gqg} that keep  the  $L^2$ type energy   rather than the helicity. This was also previously
	pointed out by Chae in  \cite{[Chae]} for 2-D standard surface quasi-geostrophic equation \eqref{qg1}.
\end{remark}
\begin{remark}
	Our result in Theorem \ref{the1.3} covers and generalizes the recent result obtained for 2-D standard surface quasi-geostrophic equation \eqref{qg1} in \cite{[WWY]}.
\end{remark}
 We will provide two approaches to show Theorem \ref{the1.1}-\ref{the1.3}. One is an appliction of the Littlewood-Paley theory developed  by  Cheskidov-Constantin-Friedlander-Shvydkoy in \cite{[CCFS]}. The second one relies on  the
       Constantin-E-Titi type commutator estimates  in  physical Onsager type spaces (see Lemma \ref{lem2.3}). It seems that the arguments in this paper can be applicable to other fluid models such as the  surface growth model
without dissipation
\begin{equation}\label{sgm}
h_{t} +\partial_{xx}(h_{x})^{2}= 0,
\end{equation}
where $h$ stands for the height of a crystalline layer.
The background of the  surface growth model \eqref{sgm} can be found in \cite{[WYM],[O],[OR],[BR2009]}. The  energy conservation in the Besov space $L^{3}(0,T; B^{\alpha}_{3,\infty}(\mathbb{T}^{3}))$ with $\alpha>1/3$
was considered in \cite{[WYM]}. One can establish the persistence of energy  criterion
 in the Onsager's    critical  spaces for the inviscid surface growth model \eqref{sgm}.

The rest of the paper is organized as follows.
 In Section 2, we present some notations and  auxiliary lemmas which will be  frequently used throughout this paper.
 The  energy conservation of weak solutions of the  surface quasi-geostrophic equation  is  considered  in  Section 3.
 Section 4 is devoted to   the helicity conservation of weak solutions of the   generalized surface quasi-geostrophic equation. Concluding remarks are given in Section 5.

\section{Notations and some auxiliary lemmas} \label{section2}

{\bf Sobolev spaces:} First, we introduce some notations used in this paper.
 For $p\in [1,\,\infty]$, the notation $L^{p}(0,\,T;X)$ stands for the set of measurable functions on the interval $(0,\,T)$ with values in $X$ and $\|f(t,\cdot)\|_{X}$ belonging to $L^{p}(0,\,T)$. The classical Sobolev space $W^{k,p}(\mathbb{R}^2)$ is equipped with the norm $\|f\|_{W^{k,p}(\mathbb{R}^2)}=\sum\limits_{|\alpha| =0}^{k}\|D^{\alpha}f\|_{L^{p}(\mathbb{R}^2)}$. \\
  {\bf Besov spaces:} $\mathcal{S}$ denotes the Schwartz class of rapidly decreasing functions, $\mathcal{S}'$ the
space of tempered distributions, $\mathcal{S}'/\mathcal{P}$ the quotient space of tempered distributions which modulo polynomials.
  We use $\mathcal{F}f$ or $\widehat{f}$ to denote the Fourier transform of a tempered distribution $f$.
To define Besov  spaces, we need the following dyadic unity partition
(see e.g. \cite{[BCD]}). Choose two nonnegative radial
functions $\varrho$, $\varphi\in C^{\infty}(\mathbb{R}^{d})$
supported respectively in the ball $\mathcal{B}=\{\xi\in
\mathbb{R}^{d}:|\xi|\leq \frac{3}{4} \}$ and the shell $\mathcal{C}\{\xi\in
\mathbb{R}^{d}: \frac{3}{4}\leq |\xi|\leq
  \frac{8}{3} \}$ such that
\begin{equation*}
 \varrho(\xi)+\sum_{j\geq 0}\varphi(2^{-j}\xi)=1, \quad
 \forall\xi\in\mathbb{R}^{d}; \qquad
 \sum_{j\in \mathbb{Z}}\varphi(2^{-j}\xi)=1, \quad \forall\xi\neq 0.
\end{equation*}
Then for every $\xi\in\mathbb{R}^{d},$ $\varphi(\xi)=\varrho(\xi/2)-\varrho(\xi)$. Write $h=\mathcal{F}^{-1} \varphi $ and $\tilde{h}=\mathcal{F}^{-1}\varrho$, then nonhomogeneous dyadic blocks  $\Delta_{j}$ are defined by
$$
\Delta_{j} u:=0 ~~ \text{if} ~~ j \leq-2, ~~ \Delta_{-1} u:=\varrho(D) u =\int_{\mathbb{R}^d}\tilde{h}(y)u(x-y)dy,$$
$$\text{and}~~\Delta_{j} u:=\varphi\left(2^{-j} D\right) u=2^{jd}\int_{\mathbb{R}^d}h(2^{j}y)u(x-y)dy  ~~\text{if}~~ j \geq 0.
$$
The nonhomogeneous low-frequency cut-off operator $S_j$ is defined by
$$
S_{j}u:= \sum_{k\leq j-1}\Delta_{k}u=\varrho(2^{-j}D)u=2^{jd}\int_{\mathbb{R}^d}\tilde{h}(2^{j}y)u(x-y)dy, ~j\in \mathbb{N}\cup {0}.$$
The homogeneous dyadic blocks $\dot{\Delta}_{j}$ and homogeneous low-frequency cut-off operators $\dot{S}_j$ are  defined  for $ \forall j\in\mathbb{Z}$ by
\begin{equation*}
  \dot{\Delta}_{j}u:= \varphi(2^{-j}D)u=2^{jd}\int_{\mathbb{R}^d}h(2^{j}y)u(x-y)dy,~j\in \mathbb{Z}
\end{equation*}
$$ \text { and }~~ \dot{S}_{j}u:=\varrho(2^{-j}D)u=2^{jd}\int_{\mathbb{R}^d}\tilde{h}(2^{j}y)u(x-y)dy,~j\in \mathbb{Z}$$
Now we introduce the definition of Besov spaces. Let $(p, r) \in[1, \infty]^{2}, s \in \mathbb{R}$, the nonhomogeneous Besov space
$$
B_{p, r}^{s}:=\left\{f \in \mathcal{S}^{\prime}\left(\mathbb{R}^{d}\right) ;\|f\|_{B_{p, r}^{s}}:=\left\|2^{j s}\right\| \Delta_{j} f\left\|_{L^{p}}\right\|_{\ell^{r}(\mathbb{Z})}<\infty\right\}
$$
and the homogeneous space
$$
\dot{B}_{p, r}^{s}:=\left\{f \in \mathcal{S}^{\prime}\left(\mathbb{R}^{d}\right) / \mathcal{P}\left(\mathbb{R}^{d}\right) ;\|f\|_{\dot{B}_{p, r}^{s}}:=\left\|2^{j s}\right\| \dot{\Delta}_{j} f\left\|_{L^{p}}\right\|_{\ell^{r}(\mathbb{Z})}<\infty\right\} .
$$
Moreover, for $s>0$ and $1\leq p,q\leq \infty$, we may write the
equivalent norm below in
  the nonhomogeneous Besov norm $\norm{f}_{B^s_{p,q}}$ of $f\in \mathcal{S}^{'}$ as
$$\norm{f}_{B^s_{p,q}}=\norm{f}_{{L^p}}+\norm{f}_{\dot{B}^s_{p,q}}.$$
Motivated by \cite{[CCFS]}, we define $\dot{B}^\alpha _{p,c(\mathbb{N})}$ to be the class of all tempered distributions $f$ for which
\begin{equation}\label{2.1}
\norm{f}_{\dot{B}^\alpha _{p,\infty}}<\infty~ \text{and}~ 	\lim_{j\rightarrow \infty} 2^{j\alpha}\norm{\dot{\Delta}_j f}_{L^p}=0,~~\text{for any}~1\leq p\leq \infty.
\end{equation}
It is clear that the Besov
spaces $\dot{B}^\alpha_{p,q}$ are included in $\dot{B}^\alpha_{p,c(\mathbb{N})}$ for any $1\leq q< \infty$. Likewise, one can define the Besov
spaces $ {B}^\alpha_{p,c(\mathbb{N})}$ similarly.\\
{\bf Mollifier kernel:} Let $\eta_{\varepsilon}:\mathbb{R}^{d}\rightarrow \mathbb{R}$ be a standard mollifier.i.e. $\eta(x)=C_0e^{-\frac{1}{1-|x|^2}}$ for $|x|<1$ and $\eta(x)=0$ for $|x|\geq 1$, where $C_0$ is a constant such that $\int_{\mathbb{R}^d}\eta (x) dx=1$. For $\varepsilon>0$, we define the rescaled mollifier $\eta_{\varepsilon}(x)=\frac{1}{\varepsilon^d}\eta(\frac{x}{\varepsilon})$ and for  any function $f\in L^1_{loc}(\mathbb{R}^d)$, its mollified version is defined as
$$f^\varepsilon(x)=(f*\eta_{\varepsilon})(x)=\int_{\mathbb{R}^d}f(x-y)\eta_{\varepsilon}(y)dy,\ \ x\in \mathbb{R}^d.$$
Next, we collect some Lemmas which will be used in the present paper.
\begin{lemma}(Bernstein inequality \cite{[BCD]})\label{berinequ}  Let $\mathcal{B}$ be a ball of $\mathbb{R}^{d}$, and $\mathcal{C}$ be a ring of $\mathbb{R}^{d}$. There exists a positive constant $C$ such that for all integer $k \geq 0$, all $1 \leq a \leq b \leq \infty$ and $u \in L^{a}\left(\mathbb{R}^{d}\right)$, the following estimates are satisfied:
$$
\begin{gathered}
\sup _{|\alpha|=k}\left\|\partial^{\alpha} u\right\|_{L^{b}\left(\mathbb{R}^{d}\right)} \leq C^{k+1} \lambda^{k+d\left(\frac{1}{a}-\frac{1}{b}\right)}\|u\|_{L^{a}\left(\mathbb{R}^{d}\right)}, \quad \operatorname{supp} \hat{u} \subset \lambda \mathcal{B}, \\
C^{-(k+1)} \lambda^{k}\|u\|_{L^{a}\left(\mathbb{R}^{d}\right)} \leq \sup _{|\alpha|=k}\left\|\partial^{\alpha} u\right\|_{L^{a}\left(\mathbb{R}^{d}\right)} \leq C^{k+1} \lambda^{k}\|u\|_{L^{a}\left(\mathbb{R}^{d}\right)}, \quad \operatorname{supp} \hat{u} \subset \lambda \mathcal{C}.
\end{gathered}
$$
\end{lemma}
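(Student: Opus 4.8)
The plan is to derive both displayed estimates from one mechanism: each $\partial^\alpha u$ (and, conversely, $u$ itself) is realized as a convolution against an explicit kernel obtained by Fourier localization, after which Young's convolution inequality converts the $L^c$ size of the kernel into the stated power of $\lambda$. Since all the $\lambda$-dependence comes out by scaling, the whole content reduces to a uniform-in-$k$ bound on the kernels. Fix once and for all a function $\phi\in C_c^\infty(\mathbb{R}^d)$ equal to $1$ on $\mathcal B$ and supported in a fixed slightly larger ball of radius $R$.

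\textbf{The ball estimate.} If $\operatorname{supp}\hat u\subset\lambda\mathcal B$, then $\hat u(\xi)=\phi(\lambda^{-1}\xi)\hat u(\xi)$, so for $|\alpha|=k$ one has $\partial^\alpha u=\mathcal F^{-1}\big[(i\xi)^\alpha\phi(\lambda^{-1}\xi)\big]*u=:g_\alpha*u$. Writing $\eta=\lambda^{-1}\xi$ gives $(i\xi)^\alpha\phi(\lambda^{-1}\xi)=\lambda^{k}\psi_\alpha(\lambda^{-1}\xi)$ with $\psi_\alpha(\eta)=(i\eta)^\alpha\phi(\eta)$, hence $g_\alpha(x)=\lambda^{k+d}h_\alpha(\lambda x)$ for $h_\alpha=\mathcal F^{-1}\psi_\alpha$. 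Choosing $c$ by $1/c=1-(1/a-1/b)$, so that $1+1/b=1/c+1/a$, Young's inequality yields $\|\partial^\alpha u\|_{L^b}\le\|g_\alpha\|_{L^c}\|u\|_{L^a}$, while scaling gives
\[
\|g_\alpha\|_{L^c}=\lambda^{k+d(1-1/c)}\|h_\alpha\|_{L^c}=\lambda^{k+d(1/a-1/b)}\|h_\alpha\|_{L^c}.
\]
Thus the first inequality follows as soon as $\|h_\alpha\|_{L^c}\le C^{k+1}$ with $C$ independent of $k$.

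\textbf{The ring estimate.} The upper bound is precisely the ball case with $a=b$ and $\mathcal B$ replaced by a fixed ball containing $\mathcal C$. For the lower bound I would exploit the multinomial identity $|\eta|^{2k}=\sum_{|\beta|=k}\tfrac{k!}{\beta!}(\eta^\beta)^2$ to write, on the ring where $|\eta|\sim1$, $1=\sum_{|\beta|=k}\big(\tfrac{k!}{\beta!}|\eta|^{-2k}\eta^\beta\big)\eta^\beta$; multiplying by a fixed cutoff $\tilde\phi$ equal to $1$ on $\mathcal C$ and supported in a ring bounded away from the origin produces smooth compactly supported symbols $\tilde g_\beta$ with $\sum_{|\beta|=k}\tilde g_\beta(\eta)(i\eta)^\beta\equiv1$ on $\mathcal C$ (the scalar $i^{-k}$ being absorbed into $\tilde g_\beta$). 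Rescaling to $\lambda\mathcal C$ and inverting the Fourier transform gives, whenever $\operatorname{supp}\hat u\subset\lambda\mathcal C$, the representation
\[
u=\lambda^{-k}\sum_{|\beta|=k}G_\beta^\lambda*\partial^\beta u,\qquad G_\beta^\lambda(x)=\lambda^{d}G_\beta(\lambda x),\quad G_\beta=\mathcal F^{-1}\tilde g_\beta.
\]
Since $\|G_\beta^\lambda\|_{L^1}=\|G_\beta\|_{L^1}$ and the number of $\beta$ with $|\beta|=k$ is $\binom{k+d-1}{d-1}$, Young's inequality with an $L^1$ kernel gives $\|u\|_{L^a}\le\lambda^{-k}\binom{k+d-1}{d-1}\max_{|\beta|=k}\|G_\beta\|_{L^1}\,\sup_{|\beta|=k}\|\partial^\beta u\|_{L^a}$, which rearranges to the lower bound provided $\|G_\beta\|_{L^1}\le C^{k+1}$.

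\textbf{The main obstacle.} The only delicate point is the uniform exponential bound $\|h_\alpha\|_{L^c},\|G_\beta\|_{L^1}\le C^{k+1}$ with $C$ independent of $k=|\alpha|$; I would obtain it by a weighted supremum estimate. For any integer $N>d/2$, using $\int_{\mathbb{R}^d}(1+|x|^2)^{-N}dx<\infty$ together with $(1+|x|^2)^N h_\alpha=\mathcal F^{-1}\big[(1-\Delta_\xi)^N\psi_\alpha\big]$ and $\|\mathcal F^{-1}g\|_{L^\infty}\le\|g\|_{L^1}$, one gets $\|h_\alpha\|_{L^1}\le C_d\,\|(1-\Delta_\xi)^N\psi_\alpha\|_{L^1_\xi}$. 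Because $\phi$ is supported in a ball of fixed radius $R$, there $|\xi^\alpha|\le R^{k}$, while Leibniz' rule applied to $(1-\Delta_\xi)^N(\xi^\alpha\phi)$ produces boundedly many terms in each of which at most $2N$ derivatives fall on $\xi^\alpha$, contributing a combinatorial factor $\le k^{2N}$; summing yields $\|(1-\Delta_\xi)^N\psi_\alpha\|_{L^1}\le C_{N,d,\phi}\,k^{2N}R^{k}$, and since $k^{2N}\le C^{k}$ this gives $\|h_\alpha\|_{L^1}\le C^{k+1}$. The matching $\|h_\alpha\|_{L^\infty}\le\|\psi_\alpha\|_{L^1}\le R^k\|\phi\|_{L^1}$ and interpolation then control every $\|h_\alpha\|_{L^c}$. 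The identical argument handles $G_\beta$, the extra factors $\tfrac{k!}{\beta!}\le d^k$ and the bounded powers $|\eta|^{-2k}$ on the ring being harmless exponentials. Tracking these constants through the two steps completes the proof, the hard part being precisely this bookkeeping that keeps the constant exponential rather than factorial in $k$.
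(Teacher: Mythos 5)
The paper does not prove this lemma at all: it is quoted verbatim from the cited reference [BCD] (Bahouri--Chemin--Danchin, Lemma 2.1). Your argument is correct and is essentially the standard proof given in that reference --- realizing $\partial^\alpha u$ (resp.\ $u$) as convolution with a rescaled kernel whose symbol is $(i\eta)^\alpha\phi(\eta)$ (resp.\ the multinomial partition of unity $\sum_{|\beta|=k}\frac{k!}{\beta!}|\eta|^{-2k}\eta^{2\beta}$ on the ring), applying Young's inequality, and verifying that the kernel norms grow at most like $C^{k+1}$ via weighted $L^\infty$--$L^1$ estimates; your bookkeeping of the $k$-dependence (the factors $k^{2N}$, $R^k$, $k!/\beta!\le d^k$, and $\binom{k+d-1}{d-1}\le 2^{k+d-1}$) is the genuinely nontrivial point and you handle it correctly.
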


\begin{lemma}(\cite{[WWY]})\label{lem2.2}
Let $\Omega$ denote the whole space $\mathbb{R}^{d}$ or the periodic domain $\mathbb{T}^{d}$.
Suppose that $\alpha, \beta\in (0,1)$,  $ p,q\in [1,\infty]$,  and $k\in \mathbb{N}^+$. Assume that  $f\in L^p(0,T;\dot{B}^\alpha_{q,\infty})$, $g\in L^p(0,T;\dot{B}^\beta_{q,c(\mathbb{N})})$,  then there holds that
 \begin{enumerate}[(1)]
 \item $ \|f^{\varepsilon} -f \|_{L^{p}(0,T;L^{q}(\Omega))}\leq C\text{O}(\varepsilon^{\alpha})\|f\|_{L^p(0,T;\dot{B}^\alpha_{q,\infty})}$;
   \item   $ \|\nabla^{k}f^{\varepsilon}  \|_{L^{p}(0,T;L^{q}(\Omega))}\leq C\text{O}(\varepsilon^{\alpha-k})\|f\|_{L^p(0,T;\dot{B}^\alpha_{q,\infty})}$;
       \item $ \|g^{\varepsilon} -g \|_{L^{p}(0,T;L^{q}(\Omega))}\leq C\text{o}(\varepsilon^{\beta})\|g\|_{L^p(0,T;\dot{B}^\beta_{q,c(\mathbb{N})})}$;
   \item   $ \|\nabla^{k}g^{\varepsilon}  \|_{L^{p}(0,T;L^{q}(\Omega))}\leq C\text{o}(\varepsilon^{\beta-k})\|g\|_{L^p(0,T;\dot{B}^\beta_{q,c(\mathbb{N})})}$;
 \end{enumerate}
\end{lemma}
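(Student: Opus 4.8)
The plan is to derive all four estimates from a single mechanism: decompose $f$ (resp.\ $g$) into its Littlewood--Paley pieces, estimate the action of the mollifier on each homogeneous dyadic block $\dot{\Delta}_j$ in two complementary ways, and then sum the resulting dyadic bounds as geometric series split at the frequency $2^j\sim\varepsilon^{-1}$. All estimates are first proved pointwise in time with constants independent of $t$, and the time integration is appended at the end.

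First I would fix $t$ and work with the spatial $L^q$ norm, writing $f^\varepsilon-f=\sum_{j\in\mathbb{Z}}\big(\eta_\varepsilon*\dot{\Delta}_j f-\dot{\Delta}_j f\big)$. For each block I would record two bounds. The \emph{low-frequency} bound uses $\int\eta_\varepsilon=1$ together with the first-order expansion $h(x-y)-h(x)=-\int_0^1 y\cdot\nabla h(x-sy)\,ds$ and Minkowski's inequality; since $\eta_\varepsilon$ is supported in $\{|y|\le\varepsilon\}$ this yields $\|\eta_\varepsilon*\dot{\Delta}_j f-\dot{\Delta}_j f\|_{L^q}\le\varepsilon\|\nabla\dot{\Delta}_j f\|_{L^q}\lesssim\varepsilon 2^j\|\dot{\Delta}_j f\|_{L^q}$, the last step being the Bernstein inequality of Lemma~\ref{berinequ}. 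The \emph{high-frequency} bound is Young's inequality, $\|\eta_\varepsilon*\dot{\Delta}_j f\|_{L^q}\le\|\eta_\varepsilon\|_{L^1}\|\dot{\Delta}_j f\|_{L^q}=\|\dot{\Delta}_j f\|_{L^q}$, so that $\|\eta_\varepsilon*\dot{\Delta}_j f-\dot{\Delta}_j f\|_{L^q}\le 2\|\dot{\Delta}_j f\|_{L^q}$. For the derivative estimates (2) and (4) I would move the derivatives onto the kernel, using $\|\nabla^k\eta_\varepsilon\|_{L^1}\lesssim\varepsilon^{-k}$ for the high modes and $\|\nabla^k\dot{\Delta}_j f\|_{L^q}\lesssim 2^{jk}\|\dot{\Delta}_j f\|_{L^q}$ (again Lemma~\ref{berinequ}) for the low modes.

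Next I would sum over $j$, using the low-frequency bound for $2^j\le\varepsilon^{-1}$ and the high-frequency bound for $2^j>\varepsilon^{-1}$, and insert $\|\dot{\Delta}_j f\|_{L^q}\le 2^{-j\alpha}\|f\|_{\dot{B}^\alpha_{q,\infty}}$. For (1) the low part contributes $\varepsilon\sum_{2^j\le\varepsilon^{-1}}2^{j(1-\alpha)}$ and the high part $\sum_{2^j>\varepsilon^{-1}}2^{-j\alpha}$; here the hypotheses $0<\alpha<1$ make both geometric series dominated by their endpoint term $\sim\varepsilon^{\alpha}$, giving the $O(\varepsilon^\alpha)$ bound. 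For (2) the exponents become $2^{j(k-\alpha)}$ and $\varepsilon^{-k}2^{-j\alpha}$, and since $0<\alpha<1\le k$ both sums are of order $\varepsilon^{\alpha-k}$. Taking the $L^p$ norm in time, with all constants independent of $t$, then yields parts (1) and (2).

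The delicate point, and the main obstacle, is upgrading the big-$O$ of (1)--(2) to the little-$o$ of (3)--(4) under the $c(\mathbb{N})$ hypothesis. Writing $a_j(t)=2^{j\beta}\|\dot{\Delta}_j g(t)\|_{L^q}$, the defining property \eqref{2.1} gives $a_j(t)\to0$ as $j\to+\infty$, which controls only the high modes; in the low-frequency sum it is instead the weight $2^{j(1-\beta)}$ (resp.\ $2^{j(k-\beta)}$) that decays, because $1-\beta>0$ (resp.\ $k-\beta>0$). I would therefore split the low-frequency sum at a fixed level $J$: for $j>J$ I use $a_j<\delta$, while for $j\le J$ I use $a_j\le\|g\|_{\dot{B}^\beta_{q,\infty}}$ together with $\varepsilon^{1-\beta}2^{J(1-\beta)}\to0$ (resp.\ $\varepsilon^{k-\beta}2^{J(k-\beta)}\to0$) as $\varepsilon\to0$; letting $\delta\to0$ shows that the pointwise-in-time quotients $\varepsilon^{-\beta}\|g^\varepsilon(t)-g(t)\|_{L^q}$ and $\varepsilon^{k-\beta}\|\nabla^k g^\varepsilon(t)\|_{L^q}$ tend to $0$. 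To pass this little-$o$ through the time integral I would invoke dominated convergence: by parts (1)--(2) the same quotients are bounded uniformly in $\varepsilon$ by $C\|g(t)\|_{\dot{B}^\beta_{q,\infty}}\in L^p(0,T)$, so their $L^p(0,T)$ norms also tend to $0$, which is exactly the asserted $o(\varepsilon^\beta)$ and $o(\varepsilon^{\beta-k})$.
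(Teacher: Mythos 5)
The paper does not prove Lemma \ref{lem2.2}: it is imported verbatim from the reference \cite{[WWY]}, so there is no in-paper argument to compare yours against. That said, your proof is the standard (and essentially the only natural) one for these mollification estimates: split into homogeneous dyadic blocks, use the first-order Taylor/Minkowski bound $\|\eta_\varepsilon*\dot{\Delta}_j f-\dot{\Delta}_j f\|_{L^q}\lesssim \min(\varepsilon 2^{j},1)\|\dot{\Delta}_j f\|_{L^q}$ (and its analogue with $\nabla^k$ placed on the block or on the kernel), sum the two geometric tails split at $2^j\sim\varepsilon^{-1}$ using $0<\alpha<1\le k$, and upgrade $O$ to $o$ for the $c(\mathbb{N})$ class by a further split of the low-frequency sum at a fixed level $J$ combined with the vanishing of $a_j=2^{j\beta}\|\dot{\Delta}_j g\|_{L^q}$ on the high tail. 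All of these steps are correct, and your treatment of the delicate little-$o$ upgrade (the $\varepsilon^{1-\beta}2^{J(1-\beta)}\to0$ observation for the fixed low block, $\sup_{j>J}a_j<\delta$ for the rest) is exactly right.

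Two minor caveats, neither fatal. First, your final passage from pointwise-in-time $o(\varepsilon^\beta)$ to $o(\varepsilon^\beta)$ in $L^p(0,T)$ via dominated convergence works for $p<\infty$ but not for $p=\infty$, which the lemma formally allows; for $p=\infty$ one needs the $c(\mathbb{N})$ decay to be interpreted uniformly in $t$ (or the time-integrated definition $\lim_j\|2^{j\beta}\|\dot{\Delta}_j g\|_{L^q}\|_{L^p_t}=0$, under which your dyadic sums can be run directly on the time-integrated quantities and the issue disappears). This looseness is inherited from the statement itself as quoted from \cite{[WWY]}. Second, writing $f=\sum_j\dot{\Delta}_j f$ requires the usual realization of $\dot{B}^\alpha_{q,\infty}$ modulo polynomials; in the applications of this paper $f$ is additionally in $L^q$, so this is harmless.
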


Next, we will state the Constantin-E-Titi type commutator estimates in  physical Onsager type spaces (see also \cite{[Yu]}).
\begin{lemma}(\cite{[WWY]})	\label{lem2.3}
Let $\Omega$ denote the whole space $\mathbb{R}^{d}$ or the periodic domain $\mathbb{T}^{d}$.
	Assume that $0<\alpha,\beta<1$, $1\leq p,q,p_{1},p_{2}\leq\infty$ and $\frac{1}{p}=\frac{1}{p_1}+\frac{1}{p_2}$.
Then, there holds
	\begin{align} \label{cet}
		\|(fg)^{\varepsilon}- f^{\varepsilon}g^{\varepsilon}\|_{L^p(0,T;L^q(\Omega))} \leq C\text{o}(\varepsilon^{\alpha+\beta}),	
	\end{align}
provided one of the following three conditions holds
\begin{enumerate}[(1)]
 \item  $f\in L^{p_1}(0,T;\dot{B}^{\alpha}_{q_{1},c(\mathbb{N})} )$, $g\in L^{p_2}(0,T;\dot{B}^{\beta}_{q_{2},\infty} )$,$1\leq q_{1},q_{2}\leq\infty$ $\frac{1}{q}=\frac{1}{q_1}+\frac{1}{q_2}$;
  \item  $\nabla f\in   L^{p_1}(0,T;\dot{B}^{\alpha}_{q_{1},c(\mathbb{N})} )$, $\nabla g\in L^{p_2}(0,T;\dot{B}^{\beta}_{q_{2},\infty} )$,  $\f{2}{d}+\f1q=\frac{1}{q_{1}}+\frac{1}{q_{2}}$,$1\leq q_{1},q_{2}<d$;
  \item  $  f\in   L^{p_1}(0,T;\dot{B}^{\alpha}_{q_{1},c(\mathbb{N})} )$, $\nabla g\in L^{p_2}(0,T;\dot{B}^{\beta}_{q_{2},\infty} )$,  $\f{1}{d}+\f1q=\frac{1}{q_{1}}+\frac{1}{q_{2}}$,$1\leq q_{2}<d$,  $1\leq q_{1}\leq\infty$.
 \end{enumerate}\end{lemma}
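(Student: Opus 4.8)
The plan is to derive \eqref{cet} from the Constantin--E--Titi commutator identity together with the translation-difference characterization of Besov regularity, treating condition (1) as the core case and deducing (2) and (3) from it by homogeneous Besov embeddings. Writing $\delta_y f:=f(\cdot-y)-f$ for the translation difference and recalling $f^\varepsilon=f*\eta_\varepsilon$, a direct expansion yields the pointwise identity
\[(fg)^\varepsilon-f^\varepsilon g^\varepsilon=\int_{\mathbb{R}^d}\eta_\varepsilon(y)\,\delta_yf\,\delta_yg\,dy-(f-f^\varepsilon)(g-g^\varepsilon)=:R^\varepsilon-(f-f^\varepsilon)(g-g^\varepsilon).\]
The second term is the easy one: Hölder in space with $\frac1q=\frac1{q_1}+\frac1{q_2}$ and in time with $\frac1p=\frac1{p_1}+\frac1{p_2}$, followed by items (3) and (1) of Lemma \ref{lem2.2}, gives $\|(f-f^\varepsilon)(g-g^\varepsilon)\|_{L^p(0,T;L^q)}\le\|f-f^\varepsilon\|_{L^{p_1}L^{q_1}}\|g-g^\varepsilon\|_{L^{p_2}L^{q_2}}\le C\,\text{o}(\varepsilon^\alpha)\,\text{O}(\varepsilon^\beta)=C\,\text{o}(\varepsilon^{\alpha+\beta})$, the little-o coming precisely from the membership $f\in\dot B^\alpha_{q_1,c(\mathbb{N})}$.

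For the remainder $R^\varepsilon$ I would invoke the difference characterization of homogeneous Besov spaces for $0<s<1$: for $g\in\dot B^\beta_{q_2,\infty}$ one has $\|\delta_yg\|_{L^{q_2}}\le C|y|^\beta\|g\|_{\dot B^\beta_{q_2,\infty}}$, while for $f\in\dot B^\alpha_{q_1,c(\mathbb{N})}$ the same Littlewood--Paley splitting (bounding low modes by Bernstein as $|y|2^{j}\|\dot\Delta_jf\|_{L^{q_1}}$, high modes by $2\|\dot\Delta_jf\|_{L^{q_1}}$, then summing the geometric series against $2^{j\alpha}\|\dot\Delta_jf\|_{L^{q_1}}$) upgrades this to the refined statement $\|\delta_yf(t)\|_{L^{q_1}}=\text{o}(|y|^\alpha)$ as $|y|\to0$, i.e. $A_\varepsilon(t):=\sup_{0<|y|\le\varepsilon}|y|^{-\alpha}\|\delta_yf(t)\|_{L^{q_1}}\to0$ with $A_\varepsilon(t)\le C\|f(t)\|_{\dot B^\alpha_{q_1,\infty}}$. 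Minkowski's integral inequality and $\operatorname{supp}\eta_\varepsilon\subset\{|y|\le\varepsilon\}$ then give, at fixed $t$, $\|R^\varepsilon(t)\|_{L^q}\le\int\eta_\varepsilon(y)\|\delta_yf\|_{L^{q_1}}\|\delta_yg\|_{L^{q_2}}\,dy\le C A_\varepsilon(t)\|g(t)\|_{\dot B^\beta_{q_2,\infty}}\varepsilon^{\alpha+\beta}$; taking $L^p_t$ with Hölder in time yields $\varepsilon^{-(\alpha+\beta)}\|R^\varepsilon\|_{L^pL^q}\le C\|A_\varepsilon\|_{L^{p_1}_t}\|g\|_{L^{p_2}\dot B^\beta_{q_2,\infty}}$. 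Since $A_\varepsilon(t)\to0$ for a.e. $t$ and is dominated by the $L^{p_1}_t$ function $C\|f(t)\|_{\dot B^\alpha_{q_1,\infty}}$, dominated convergence forces $\|A_\varepsilon\|_{L^{p_1}_t}\to0$, so $R^\varepsilon=\text{o}(\varepsilon^{\alpha+\beta})$ and case (1) follows.

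Finally, conditions (2) and (3) reduce to (1) by applying, to whichever of $f,g$ is controlled only through its gradient, the identity $\|\nabla h\|_{\dot B^s_{q,r}}\approx\|h\|_{\dot B^{s+1}_{q,r}}$ together with the homogeneous embedding $\dot B^{\alpha+1}_{q_1,r}\hookrightarrow\dot B^{\alpha}_{\tilde q_1,r}$, $\frac1{\tilde q_1}=\frac1{q_1}-\frac1d$ (legitimate since the relevant exponent is $<d$, and preserving the $c(\mathbb{N})$ property because the Bernstein step bounds $2^{j\alpha}\|\dot\Delta_jh\|_{L^{\tilde q_1}}$ by a multiple of $2^{j(\alpha+1)}\|\dot\Delta_jh\|_{L^{q_1}}\to0$). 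This lands $f,g$ in spaces of type $\dot B^{\alpha}_{\tilde q_1,c(\mathbb{N})}$ and $\dot B^{\beta}_{\tilde q_2,\infty}$, and the exponent relations $\frac2d+\frac1q=\frac1{q_1}+\frac1{q_2}$ (case (2)) and $\frac1d+\frac1q=\frac1{q_1}+\frac1{q_2}$ (case (3)) are exactly what is needed so that the new integrability exponents satisfy the Hölder relation $\frac1q=\frac1{\tilde q_1}+\frac1{\tilde q_2}$ of case (1), while the smoothness indices $\alpha,\beta$ are unchanged, again producing $\text{o}(\varepsilon^{\alpha+\beta})$. The main obstacle, and the only place needing genuine care, is the passage from the classical $\text{O}(|y|^\alpha)$ difference bound to the refined $\text{o}(|y|^\alpha)$ bound for the $c(\mathbb{N})$ class and its survival under time integration: one must verify that the pointwise-in-$t$ little-o is uniformly dominated, for otherwise the temporal $L^{p_1}$ norm of $A_\varepsilon$ need not vanish.
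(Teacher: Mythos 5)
There is no in-paper proof to compare against: the paper states Lemma \ref{lem2.3} as a quotation from \cite{[WWY]} and never proves it. The closest in-paper material is the Littlewood--Paley analogue of your starting point, namely the $S_N$-commutator identity \eqref{ceti} together with the kernels $K_1,K_2$ in Approach 1 of Theorems \ref{the1.1} and \ref{the1.3}; your proposal is precisely the physical-space counterpart of that computation, and its main body is sound. The mollified Constantin--E--Titi identity is correct; the term $(f-f^{\varepsilon})(g-g^{\varepsilon})$ is correctly dispatched by H\"older plus items (3) and (1) of Lemma \ref{lem2.2}; your upgrade of $\|\delta_y f\|_{L^{q_1}}\le C|y|^{\alpha}\|f\|_{\dot{B}^{\alpha}_{q_1,\infty}}$ to $\text{o}(|y|^{\alpha})$ under the $c(\mathbb{N})$ hypothesis (dominated convergence for the convolution of an $\ell^1$ kernel with a sequence vanishing at $+\infty$) is exactly the mechanism the paper uses for $K_1\ast\dot{d}_j$ and $K_2\ast\dot{d}_j$; and the reduction of cases (2)--(3) to case (1) via $\|\nabla h\|_{\dot{B}^{s}_{q,r}}\approx\|h\|_{\dot{B}^{s+1}_{q,r}}$ and the Bernstein embedding $\dot{B}^{s+1}_{q_i,r}\hookrightarrow\dot{B}^{s}_{\tilde{q}_i,r}$, $\frac{1}{\tilde{q}_i}=\frac{1}{q_i}-\frac{1}{d}$, matches the exponent shifts $\frac{2}{d}$ and $\frac{1}{d}$ in the hypotheses, with the $c(\mathbb{N})$ property preserved as you note. (One small point worth adding there: in cases (2)--(3), $f$ is recovered from $\nabla f$ only up to constants, but the commutator $(fg)^{\varepsilon}-f^{\varepsilon}g^{\varepsilon}$ is invariant under $f\mapsto f+c$, so this is harmless.)

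The one step that fails as written is the final time integration. From $A_{\varepsilon}(t)\to 0$ a.e.\ and $A_{\varepsilon}(t)\le C\|f(t)\|_{\dot{B}^{\alpha}_{q_1,\infty}}$ you deduce $\|A_{\varepsilon}\|_{L^{p_1}_t}\to 0$ by dominated convergence; this is valid only for $p_1<\infty$, whereas the lemma allows $p_1=\infty$, and this case actually occurs in the paper's own application (Theorem \ref{the1.2} with $r_1=\infty$, $r_2=p$). The repair when the target exponent $p$ is finite is to avoid the H\"older split: bound $\|R^{\varepsilon}(t)\|_{L^q}\le C\varepsilon^{\alpha+\beta}A_{\varepsilon}(t)\|g(t)\|_{\dot{B}^{\beta}_{q_2,\infty}}$ and apply dominated convergence in time directly to the product, which vanishes a.e.\ and is dominated by the $L^{p}_t$ function $C\|f(t)\|_{\dot{B}^{\alpha}_{q_1,\infty}}\|g(t)\|_{\dot{B}^{\beta}_{q_2,\infty}}$. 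At the genuine endpoint $p=p_1=p_2=\infty$ no argument of this type can close the step: a.e.-in-$t$ little-o carries no uniformity, the decay rate hidden in the $c(\mathbb{N})$ condition is pointwise in $t$, and one should not expect the uniform-in-time conclusion (consider $f(t)=g(t)$ equal to unit-Besov-norm wave packets at frequency $2^{j(t)}$ with $j(t)\to\infty$ as $t\to T$); so the lemma must be read with the resulting time exponent finite, as it is in every use made of it in this paper. The same caveat is inherited by item (3) of Lemma \ref{lem2.2}, which your easy-term estimate uses as a black box.
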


For the convenience of readers, we present the definition of the weak solutions of   the surface quasi-geostrophic   equation
 \eqref{gqg}.

\begin{definition}\label{qgdefi}
	A vector field $\theta\in C_{\text{weak}}([0,T];L^{p}(\mathbb{R}^{2}))$ is called  a weak solution of the 2-D  quasi-geostrophic equation with initial data $\theta_{0}\in L^{p}(\mathbb{R}^{2})$ with $p\in [2,\infty)$ if there holds	
\begin{equation}
	\int_{\mathbb{R}^{2}}[\theta(x,t) \varphi                                     (x,t)- \theta(x,0) \varphi(x,0)]dx =\int_{0}^{t}\int_{\mathbb{R}^{2}}\theta(x,s)\big(\partial_{t}\varphi(x,s)+v(x,s)\cdot\nabla\varphi(x,s)\big)dxds
\end{equation}
	and
	\begin{equation}\label{1.9}
		v(x,t)=\mathcal{R}^{\perp}\Lambda^{\gamma-1}\theta,
	\end{equation}
	for any   test function $\varphi\in C_{0}^{\infty}([0,T];C^{\infty}(\mathbb{R}^{2}))$.
\end{definition}

\section{Energy conservation of weak solutions for 2-D surface quasi-geostrophic equation}
In this section, we are concerned with the energy conservation
for 2-D generalized surface quasi-geostrophic equation \eqref{gqg} and 2-D standard surface quasi-geostrophic equation \eqref{qg1}.  To prove theorem \ref{the1.1}, we will give two different approaches due to Littlewood-Paley theory developed  by  Cheskidov-Constantin-Friedlander-Shvydkoy in \cite{[CCFS]}and the
Constantin-E-Titi type commutator estimates  in  physical Onsager type spaces (see Lemma \ref{lem2.3}), respectively.
\begin{proof}[Proof of Theorem \ref{the1.1}] \  \\
{\bf Approach 1: Littlewood-Paley theory:} Multiplying  the surface quasi-geostrophic equation \eqref{gqg}
by $S_{N}(S_{N}\theta |S_{N}\theta |^{p-2}) $ with $p\geq 2$ (see the notations in Section 2), together with the incompressible condition and using integration by parts, we see that
$$
\f{1}{p}\f{d}{dt}\int_{\mathbb{R}^{2}} |S_{N}\theta |^{p}dx=(p-1)\int_{\mathbb{R}^{2}} S_{N}(v_{j}\theta)  \partial_{j} S_{N}\theta |S_{N}\theta|^{p-2}dx.
$$
Since the divergence-free condition of the velocity field $v(x,t)$ helps us to derive that
$$
\int_{\mathbb{R}^{2}} S_{N}v_{j} \partial_{j}S_{N}\theta
S_{N}\theta |S_{N}\theta|^{p-2}dx=0,
$$
thus we conclude that
$$
\f{1}{p}\f{d}{dt}\int_{\mathbb{R}^{2}} |S_{N}\theta|^{p}dx=(p-1)\int_{\mathbb{R}^{2}}\big[S_{N}(v_{j}\theta)  -S_{N}v_{j} S_{N}\theta \big]\partial_{j} S_{N}\theta| S_{N}\theta |^{p-2}dx.
$$
Recall the Constantin-E-Titi identity
\be\ba\label{ceti}
&S_{N}(fg)  -S_{N}f S_{N}g\\
=&  2^{ 2N}\int_{\mathbb{R}^2}\tilde{h}(2^{N}y)[f(x-y)-f(x)][g(x-y)-g(x)]dy-(f-S_{N}f)(g-S_{N}g),
\ea\ee
where we used  $2^{ 2N}\int_{\mathbb{R}^2}\tilde{h}(2^{N}y)dy=\mathcal{F}(\tilde{h}(\cdot))|_{\xi=0}=1$.

Taking advantage of the H\"older inequality, we discover that
\begin{equation}\label{key03}
	\begin{aligned}
	&\B| \int_{\mathbb{R}^{2}}\big[S_{N}(v_{j}\theta) -S_{N}v_{j} S_{N}\theta \big]\partial_{j} S_{N}\theta| S_{N}\theta|^{p-2}dx \B|\\
\leq	&C  \|S_{N}(v_{j}\theta)  -S_{N}v_{j}S_{N}\theta\|_{ L^{\f{p+1}{2}} (\mathbb{R}^{2})}  \|\partial_{j}S_{N}\theta\|_{ L^{p+1}  (\mathbb{R}^{2})} \||S_{N}\theta|^{p-2}\|_{ L^{\f{p+1}{p-2} }  (\mathbb{R}^{2})}\\
\leq	&C  \|S_{N}(v_{j}\theta)  -S_{N}v_{j}S_{N}\theta\|_{ L^{\f{p+1}{2}}  (\mathbb{R}^{2})}  \|\partial_{j}S_{N}\theta\|_{ L^{p+1}  (\mathbb{R}^{2})} \| S_{N}\theta \|^{p-2}_{L^{p+1} (\mathbb{R}^{2})}.
\end{aligned}\end{equation}
With the help of
\eqref{ceti} and the Minkowski inequality, we write
$$\ba\label{key}
 &\|S_{N}(v_{j}\theta)  -S_{N}v_{j}S_{N}\theta\|_{  L^{\f{p+1}{2}}  (\mathbb{R}^{2})}
 \\\leq& 2^{ 2N}\int_{\mathbb{R}^2}|\tilde{h}(2^{N}y)|\| v_{j}(x-y)-v_{j}(x) \|_{ L^{p+1}  (\mathbb{R}^{2})}
 \| \theta(x-y)-\theta(x) \|_{ L^{p+1} (\mathbb{R}^{2}) }dy
\\&+\| v_{j}-S_{N}v_{j} \|_{ L^{p+1}  (\mathbb{R}^{2})} \| \theta-S_{N}\theta \|_{L^{p+1} (\mathbb{R}^{2})}\\
=&I+II.
\ea$$
In view of  the mean value theorem and the Bernstein inequality, we know that
\be\ba\label{e3.3}
&\| v_{j}(x-y)-v_{j}(x) \|_{ L^{p+1} (\mathbb{R}^{2})}
\leq&C\B(\sum_{j\leq N}2^{j}|y|\|\dot{\Delta}_{j}v\|_{L^{p+1} (\mathbb{R}^{2})}+\sum_{j>N}\|\dot{\Delta}_{j}v\|_{L^{p+1} (\mathbb{R}^{2})}\B).
\ea\ee
Using the Bernstein inequality again and the boundedness of Riesz transforms on Lebesgue spaces, we see that
$$
\|\dot{\Delta}_jv\|_{L^{p+1} (\mathbb{R}^{2})}=\|\mathcal{R}^{\perp}\Lambda^{\gamma-1}\dot{\Delta}_j\theta\|_{L^{p+1} (\mathbb{R}^{2})}\leq C2^{j(\gamma-1)}\|\dot{\Delta}_j\theta\|_{L^{p+1} (\mathbb{R}^{2})},~\text{for} ~0<p<\infty. $$
This together with \eqref{e3.3} means that
\be\ba\label{3.4}
&\| v_{j}(x-y)-v_{j}(x) \|_{ L^{p+1}  (\mathbb{R}^{2})} \\
\leq&C\B( 2^{N(\gamma-\alpha)}|y|\sum_{j\leq N}2^{-(N-j)(\gamma-\alpha)}2^{j\alpha}\|\dot{\Delta}_{j}\theta\|_{L^{p+1} (\mathbb{R}^{2})}\\&+2^{(\gamma-1-\alpha )N}\sum_{j>N} 2^{(N-j)(\alpha+1-\gamma)} 2^{j\alpha}\|\dot{\Delta}_{j}\theta\|_{L^{p+1} (\mathbb{R}^{2})}\B).
\ea\ee
Before going further, in the spirit of  \cite{[CCFS]}, we set the following localized kernel
\be\label{K1}K_{1}(j)=\left\{\begin{aligned}
	&2^{j(\alpha+1-\gamma)},~~~~~~~~\text{if}~j\leq0,\\
	& 2^{-(\gamma-\alpha)j},~~~~~~~~~\text{if}~j>0,
\end{aligned}\right.
\ee
and  we denote $\dot{d}_j=2^{j\alpha}\|\dot{\Delta}_{j}\theta\|_{L^{p+1} (\mathbb{R}^{2})}$.\\
As a consequence, we get
$$\ba
\| v_{j}(x-y)-v_{j}(x) \|_{ L^{p+1} (\mathbb{R}^{2}) }\leq& C\left[ 2^{N(\gamma-\alpha)}|y| +2^{(\gamma-1-\alpha )N}\right]\left(K_{1}\ast \dot{d}_j\right)(N)\\
\leq & C( 2^{N}|y| +1)2^{(\gamma-1-\alpha )N}\left(K_{1}\ast \dot{d}_j\right)(N).
 \ea$$
To bound $\| \theta(x-y)-\theta(x) \|_{ L^{p+1}  (\mathbb{R}^{2})}$, just as \cite{[CCFS]}, we denote
\be\label{K2}
K_{2}(j)=\left\{\begin{aligned}
	&2^{j\alpha},~~~~~~~~\text{if}~j\leq0,\\
	& 2^{-(1-\alpha)j},~~\text{if}~j>0.
\end{aligned}\right.
\ee
A slightly modified proof of \eqref{e3.3} and \eqref{3.4} gives
\be\ba\label{3.5}
&\| \theta(x-y)-\theta(x) \|_{ L^{p+1}  (\mathbb{R}^{2})}\\
\leq &C\B(\sum_{j\leq N}2^{j}|y|\|\dot{\Delta}_{j}\theta\|_{L^{p+1} (\mathbb{R}^{2})}+\sum_{j>N}\|\dot{\Delta}_{j}\theta\|_{L^{p+1} (\mathbb{R}^{2})}\B)\\
\leq&C\B( 2^{N(1-\alpha)}|y|\sum_{j\leq N}2^{-(N-j)(1-\alpha)}2^{j\alpha}\|\dot{\Delta}_{j}\theta\|_{L^{p+1} (\mathbb{R}^{2})}+2^{-\alpha N}\sum_{j>N} 2^{(N-j)\alpha} 2^{j\alpha}\|\dot{\Delta}_{j}\theta\|_{L^{p+1} (\mathbb{R}^{2})}\B)
\\
\leq& C\left[ 2^{N(1-\alpha)}|y| +2^{ -\alpha N}\right]\left(K_{2}\ast \dot{d}_j\right)(N)\\
\leq& C(2^{N}|y| +1)2^{ -\alpha N}\left(K_{2}\ast \dot{d}_j\right)(N).
\ea\ee
Notice that
$$ \sup_{N} 2^{ 2N}\int_{\mathbb{R}^2}  |\tilde{h}(2^{N}y)|(2^{N}|y| +1)^{2}dy<\infty.$$
Hence, we deduce from  \eqref{3.4} and  \eqref{3.5} that
$$I\leq C2^{(\gamma-1-\alpha )N}\left(K_{1}\ast \dot{d}_j\right)(N)2^{ -\alpha N}\left(K_{2}\ast \dot{d}_j\right)(N).$$
In light of
 the Bernstein inequality, we infer that
$$\| v_{j}-{S}_{N}v_{j} \|_{ L^{p+1}  (\mathbb{R}^{2})} \leq
\sum_{j\geq N}\|\dot{\Delta}_{j}v\|_{L^{p+1}}\leq C2^{(\gamma-1-\alpha )N}\left(K_{1}\ast  \dot{d}_j \right)(N),
$$
where  we used $N>0.$

Likewise,
$$\| \theta-{S}_{N}\theta \|_{ L^{p+1}  (\mathbb{R}^{2})}\leq C 2^{ -\alpha N}\left(K_{2}\ast \dot{d}_j\right)(N),
$$
 from which it follows that
$$
II\leq C2^{(\gamma-1-\alpha )N}\left(K_{1}\ast \dot{d}_j\right)(N)2^{ -\alpha N}\left(K_{2}\ast \dot{d}_j\right)(N).
$$
Consequently, we know that
\be\label{3.7}
\|S_{N}(v_{j}\theta)  -S_{N}v_{j}S_{N}\theta\|_{ L^{\f{p+1}{2}} (\mathbb{R}^{2}) }\leq C2^{(\gamma-1-\alpha )N}\left(K_{1}\ast \dot{d}_j\right)(N)2^{ -\alpha N}\left(K_{2}\ast \dot{d}_j\right)(N).
\ee
We conclude
by some straightforward calculations that
\be\label{3.8}
\|\partial_{j}S_{N}\theta\|_{ L^{p+1} (\mathbb{R}^{2}) }\leq \sum_{j\leq N}2^{j} \| {\Delta}_{j}\theta\|_{L^{p+1} (\mathbb{R}^{2})}\leq  2^{N(1-\alpha)}\left(K_{2}\ast {d}_j\right)(N),
\ee
where ${d}_{j}=2^{j\alpha}\|{\Delta}_{j}\theta\|_{L^{p+1} (\mathbb{R}^{2})}$.\\
Inserting \eqref{3.7} and \eqref{3.8} into  \eqref{key03} gives
\be\ba\label{e3.10}
&\|S_{N}(v_{j}\theta)  -S_{N}v_{j}S_{N}\theta\|_{ L^{\f{p+1}{2}}  (\mathbb{R}^{2})}\|\partial_{j}S_{N}\theta\|_{ L^{p+1}  (\mathbb{R}^{2})} \\
\leq& C2^{(\gamma-1-\alpha )N}\left(K_{1}\ast \dot{d}_j\right)(N)2^{ -\alpha N}\left(K_{2}\ast \dot{d}_j\right)(N) 2^{N(1-\alpha)}\left(K_{2}\ast {d}_j\right)(N)\\
\leq& C2^{(\gamma-3\alpha )N}\left(K_{1}\ast \dot{d}_j\right)(N) \left(K_{2}\ast \dot{d}_j\right)(N)\left( K_{2}\ast {d}_j\right)(N).
\ea\ee
 To ensure that $K_{1}, K_{2}\in  l^{1}(\mathbb{Z})$, we need
\be\left\{\ba\label{}
&\alpha+1-\gamma>0,\\
&\gamma-\alpha>0,\\
&0<\alpha<1,\\
&\gamma-3\alpha\leq 0,
\ea\right.\ee
which lead to $\alpha\geq\f{\gamma}{3}$ and $\alpha<\gamma<\alpha+1$.

Then substituting \eqref{e3.10} into \eqref{key03} and using the Young inequality, we arrive at
\be\ba\label{3.111}
&\B| \int\big[S_{N}(v_{j}\theta) -S_{N}v_{j} S_{N}\theta \big]\partial_{i} S_{N}\theta| S_{N}\theta|^{p-2}dx \B|\\
\leq&C2^{(\gamma-3\alpha )N}\left(K_{1}\ast \dot{d}_j\right)(N)\left( K_{2}\ast \dot{d}_j\right)(N)\left( K_{2}\ast {d}_j\right)(N)\| S_{N}\theta \|^{p-2}_{L^{p+1} (\mathbb{R}^{2})}\\
\leq&C2^{(\gamma-3\alpha )N}\left(K_{1}\ast \dot{d}_j\right)(N)\left( K_{2}\ast \dot{d}_j\right)(N) \left(K_{2}\ast {d}_j\right)(N)\|  \theta \|^{p-2}_{L^{p+1} (\mathbb{R}^{2})}\\
\leq &C2^{(\gamma-3\alpha )N}\sup_{N}(\dot{d}_N)^2\sup_{N}({d}_N)\|  \theta \|^{p-2}_{L^{p+1} (\mathbb{R}^{2})},
\ea\ee
where $\dot{d}_{N}=2^{N\alpha}\|\dot{\Delta}_{N}\theta\|_{L^{p+1} (\mathbb{R}^{2})}$ and ${d}_{N}=2^{N\alpha}\|{\Delta}_{N}\theta\|_{L^{p+1} (\mathbb{R}^{2})}$.

Case 1: if $\alpha=\frac{\gamma}{3}$  with $0<\gamma <\frac{3}{2}$,  it follows from \eqref{3.111} and the dominated convergence theorem that
 \be\ba
 &\B| \int_{\mathbb{R}^{2}}\big[S_{N}(v_{j}\theta) -S_{N}v_{j} S_{N}\theta \big]\partial_{i} S_{N}\theta| S_{N}\theta|^{p-2}dx \B|\\
  \leq &C\left(K_{1}\ast \dot{d}_j\right)(N)\left( K_{2}\ast \dot{d}_j\right)(N)\sup_{N}({d}_N)\|  \theta \|^{p-2}_{L^{p+1} (\mathbb{R}^{2})}\\
  \leq &C\left(K_{1}\ast \dot{d}_j\right)(N)\left( K_{2}\ast \dot{d}_j\right)(N)\|  \theta \|^{p-1}_{{{B} }^{\alpha}_{p+1,\infty }} \rightarrow 0,~\text{as} ~N\rightarrow +\infty.
 \ea\ee
This together with  $\theta\in  L^{p+1}(0,T; B^{\frac{\gamma}{3}}_{p+1,c(\mathbb{N})})$, we conclude by the
the dominated convergence theorem once again that
$$\ba
&\limsup_{N\rightarrow +\infty}\int_{0}^{T}\B| \int_{\mathbb{R}^{2}}\big[S_{N}(v_{j}\theta) -S_{N}v_{j} S_{N}\theta \big]\partial_{i} S_{N}\theta| S_{N}\theta|^{p-2}dx \B|dt
\\  \leq  &C\int_{0}^{T}\left(K_{1}\ast \dot{d}_j\right)(N)\left( K_{2}\ast \dot{d}_j\right)(N)\|  \theta \|^{p-1}_{{{B} }^{\alpha}_{p+1,\infty }}dt \rightarrow 0,
\ea$$

Case 2: if $\frac{\gamma}{3}\leq \gamma-1<\alpha<1$ with $\frac{3}{2}\leq \gamma <2$, then from \eqref{3.111} and taking  $N\rightarrow +\infty$, we have
 $$\ba
 &\B|\int_{\mathbb{R}^{2}}\big[S_{N}(v_{j}\theta)  -S_{N}v_{j} S_{N}\theta \big]\partial_{j} S_{N}\theta| S_{N}\theta |^{p-2}dx\B|\\
 \leq &C2^{(\gamma-3\alpha )N}\|  \theta \|^{3}_{{{B} }^{\alpha}_{p+1,\infty }(\mathbb{R}^{2})}\|  \theta \|^{p-2}_{L^{p+1} (\mathbb{R}^{2})}\\
 \leq &C2^{(\gamma-3\alpha )N}\|  \theta \|^{p+1}_{{{B} }^{\alpha}_{p+1,\infty }(\mathbb{R}^{2})}\rightarrow 0.
 \ea$$
 which in turn gives
 $$\ba
&\limsup_{N\rightarrow +\infty}\int_{0}^{t}\B| \int_{\mathbb{R}^{2}}\big[S_{N}(v_{j}\theta) -S_{N}v_{j} S_{N}\theta \big]\partial_{i} S_{N}\theta| S_{N}\theta|^{p-2}dx \B|ds
\\   \leq  &\limsup_{N\rightarrow +\infty} 2^{(\gamma-3\alpha )N}C\int_{0}^{t}\|  \theta \|^{p+1}_{{{B} }^{\alpha}_{p+1,\infty }(\mathbb{R}^{2})}ds\rightarrow 0.\ea$$
 Hence,  no matter in which case,  we have
 $$\ba
 &\B|\int_{0}^{t}\int_{\mathbb{R}^{2}}\big[S_{N}(v_{j}\theta)  -S_{N}v_{j} S_{N}\theta \big]\partial_{j} S_{N}\theta| S_{N}\theta |^{p-2}dxds\B|\rightarrow0,~\text{as} ~N\rightarrow +\infty.\ea$$
Then we have completed the proof of Theorem \ref{the1.1}.

\ \\
{\bf Approach 2: Constantin-E-Titi type commutator estimates  in  physical Onsager type spaces:}
  Mollifying the surface quasi-geostrophic equation \eqref{gqg} in space (see the notations in Section 2) and using the divergence-free condition, we know that
$$\theta^{\varepsilon}_{t}+\text{div} (v \theta)^{\varepsilon}=0,$$
 which yields that
 $$
\f{1}{p}\f{d}{dt}\int_{\mathbb{R}^{2}} |\theta^{\varepsilon}|^{p}dx=(p-1)\int_{\mathbb{R}^{2}}(v_{j}\theta)^{\varepsilon} \partial_{j} \theta^{\varepsilon}| \theta^{\varepsilon}|^{p-2}dx.
$$
The incompressible condition allows us to formulate the above equation as
$$
\f{1}{p}\f{d}{dt}\int_{\mathbb{R}^{2}} |\theta^{\varepsilon}|^{p}dx=(p-1)\int_{\mathbb{R}^{2}}\big[(v_{j}\theta)^{\varepsilon} -v_{j}^{\varepsilon}\theta^{\varepsilon} \big]\partial_{j} \theta^{\varepsilon}| \theta^{\varepsilon}|^{p-2}dx,
$$
which immediately  means
$$
\frac{1}{p}\left(\|\theta^{\varepsilon}(x,t)\|_{L^{p} (\mathbb{R}^{2})}
-\|\theta^{\varepsilon}(x,0)\|_{L^{p} (\mathbb{R}^{2})}\right)= (p-1)\int^{t}_{0} \int_{\mathbb{R}^{2}}\big((v_{j}\theta)^{\varepsilon} -v_{j}^{\varepsilon}\theta^{\varepsilon} \big)\partial_{j} \theta^{\varepsilon} |\theta^{\varepsilon}|^{p-2}dxds.
$$
The H\"older inequality enables us to get
\begin{equation}\label{e4.3}
	\begin{aligned}
	&\B|\int^{t}_{0}\int_{\mathbb{R}^{2}}\big[(v_{j}\theta)^{\varepsilon} -v_{j}^{\varepsilon}\theta^{\varepsilon} \big]\partial_{i} \theta^{\varepsilon}| \theta^{\varepsilon}|^{p-2}dxds\B|\\
\leq	&C  \|(v_{j}\theta)^{\varepsilon} -v_{j}^{\varepsilon}\theta^{\varepsilon}\|_{L^{\frac{p+1}{2}}(0,T;L^{\f{p+1}{2}} (\mathbb{R}^{2}))}  \|\partial_{j}\theta^{\varepsilon}\|_{L^{p+1}(0,T;L^{p+1} (\mathbb{R}^{2}))} \||\theta^{\varepsilon}|^{p-2}\|_{L^{\frac{p+1}{p-2}}(0,T;L^{\f{p+1}{p-2}} (\mathbb{R}^{2}))}.
\end{aligned}\end{equation}
Since
$B^s_{p,q}=\dot{B}^s_{p,q}\cap L^{p}$ for $s>0$, the hypothesis $ \theta\in L^{p+1}(0,T;B^{\alpha}_{p+1,c(\mathbb{N})})$ means $ \theta\in L^{p+1}(0,T;\dot{B}^{\alpha}_{p+1,c(\mathbb{N})})$.
This  and the boundedness of Riesz transforms in homogeneous Besov spaces, we obtain
$$
v=\mathcal{R}^{\perp}\Lambda^{\gamma-1}\theta\in   L^{p+1}(0,T;\dot{B}^{\alpha-\gamma+1}_{p+1,c(\mathbb{N})}).
$$
Combining  $\theta\in L^{p+1}(0,T;\dot{B}^{\alpha}_{p+1,c(\mathbb{N})})$ with  $v \in   L^{p+1}(0,T;\dot{B}^{\alpha-\gamma+1}_{p+1,c(\mathbb{N})})$ and invoking  Lemma \ref{lem2.3}, we see that
\begin{equation}\label{e4.4}
\|(v_{j}\theta)^{\varepsilon} -v_{j}^{\varepsilon}\theta^{\varepsilon}\|_{L^{\f{p+1}{2}}(0,T;L^{\f{p+1}{2}} (\mathbb{R}^{2}))}\leq o(\varepsilon^{2\alpha-\gamma+1}),
\end{equation}
where we require $0<\alpha<1$ and  $0<\alpha-\gamma+1<1$.\\
 Using Lemma \ref{lem2.2}, we know that
\begin{equation}\label{e4.5}
\|\partial_{j}\theta^{\varepsilon}\|_{L^{p+1}(0,T;L^{p+1} (\mathbb{R}^{2}))}\leq o(\varepsilon^{\alpha-1}).
\end{equation}
Moreover,  in view of  the definition of Besov spaces, we have
\begin{equation}\label{e4.6} \||\theta^{\varepsilon}|^{p-2}\|_{L^{\f{p+1}{p-2}}(0,T;L^{\f{p+1}{p-2}} (\mathbb{R}^{2}))}\leq C \| \theta^{\varepsilon} \|^{p-2}_{L^{p+1}(0,T;L^{ p+1 } (\mathbb{R}^{2}))}\leq C \| \theta  \|^{p-2}_{L^{p+1}(0,T;B^{\alpha}_{p+1,c(\mathbb{N})})}.
\end{equation}
 Then substituting \eqref{e4.4}-\eqref{e4.6} into \eqref{e4.3}, setting $ \alpha=\frac{\gamma}{3}$ and choosing  $\varepsilon\rightarrow 0$ with $0<\gamma <\frac{3}{2}$, we have
 $$\B|\int^{t}_{0}\int\big[(v_{j}\theta)^{\varepsilon} -v_{j}^{\varepsilon}\theta^{\varepsilon} \big]\partial_{i} \theta^{\varepsilon}| \theta^{\varepsilon}|^{p-2}dxds\B|\leq o(\varepsilon^{3\alpha-\gamma}) \| \theta  \|^{p-2}_{L^{p+1}(0,T;B^{\alpha}_{p+1,c(\mathbb{N})})}\rightarrow 0.$$
Then  we have completed  the proof of the first part of Theorem \ref{the1.1}.
By a similar argument to \eqref{e4.4}-\eqref{e4.6}, we can conclude the second part of Theorem \ref{the1.1} for $ \theta\in L^{p+1}(0,T;{B}^{\alpha}_{p+1,\infty}).$
\end{proof}

\begin{proof}[Proof of Corollary \ref{coro1.1}]
It is enough to notice that
$$
\f{1}{2}\f{d}{dt}\int_{\mathbb{R}^{2}} |S_{N}\theta|^{2}dx= \int_{\mathbb{R}^{2}}\big[S_{N}(v_{j}\theta)  -S_{N}v_{j} S_{N}\theta \big]\partial_{j} S_{N}\theta dx.
$$
Exactly as in the above derivation in the Theorem \ref{the1.1},   the  proof of this Corollary.
\end{proof}
Next, we present the proof of Theorem \ref{the1.2}. In the proof of Theorem \ref{the1.1} ,  it suffices to replace \eqref{3.4} by
$$\ba&\| v_{j}(x-y)-v_{j}(x) \|_{ L^{p+1}  (\mathbb{R}^{2})} \\
\leq&C\left( 2^{N(1-\alpha)}|y|\sum_{j\leq N}2^{-(N-j)(1-\alpha)}2^{j\alpha}\|\dot{\Delta}_{j}v\|_{L^{p+1} (\mathbb{R}^{2})}+2^{-\alpha N}\sum_{j>N} 2^{(N-j)(\alpha)} 2^{j\alpha}\|\dot{\Delta}_{j}v\|_{L^{p+1} (\mathbb{R}^{2})}\right)\\
\leq& C\left[ 2^{N(1-\alpha)}|y| +2^{ -\alpha  N}\right]\left(K_{1}\ast \dot{d}_j\right)(N)\\
\leq & C( 2^{N}|y| +1)2^{  -\alpha  N}\left(K_{1}\ast \dot{d}_j\right)(N),
\ea$$
where
$$K_{1}(j)=\left\{\begin{aligned}
	&2^{j\alpha},~~~~~~~~\text{if}~j\leq0,\\
	& 2^{-(1-\alpha)j},~\text{if}~j>0,
\end{aligned}\right.
$$
and   $\dot{d}_{1}(j)=2^{j\alpha}\|\dot{\Delta}_{j}v\|_{L^{p+1}}$. We omit the details here. We only outline its proof by Constantin-E-Titi type commutator estimates  in  physical Onsager type spaces in the following.
\begin{proof}[Proof of Theorem \ref{the1.2}]
Based on the second proof of Theorem \ref{the1.1}, we just give the key estimates.
It follows from the H\"older inequality, we discover that
\begin{equation}\label{4.3}
	\begin{aligned}
	&\B|\int^{t}_{0}\int_{ \mathbb{R}^{2}}\big[(v_{j}\theta)^{\varepsilon} -v_{j}^{\varepsilon}\theta^{\varepsilon} \big]\partial_{i} \theta^{\varepsilon}| \theta^{\varepsilon}|^{p-2}dxds\B|\\
\leq	&C  \|(v_{j}\theta)^{\varepsilon} -v_{j}^{\varepsilon}\theta^{\varepsilon}\|_{L^{\frac{r_1 r_2}{r_1+r_2}}(0,T;L^{\f{p+1}{2}} (\mathbb{R}^{2}))}  \|\partial_{j}\theta^{\varepsilon}\|_{L^{r_2}(0,T;L^{p+1} (\mathbb{R}^{2}))} \||\theta^{\varepsilon}|^{p-2}\|_{L^{p_4}(0,T;L^{\f{p+1}{p-2}} (\mathbb{R}^{2}))},
\end{aligned}\end{equation}
where $\frac{r_1+r_2}{r_1 r_2}+\frac{1}{r_{2}}+\frac{1}{p_{4}}=1.$

From
$v \in L^{r_1}(0,T;\dot{B}^{\frac{1}{3}}_{p+1,c(\mathbb{N})})$ and $ \theta\in L^{r_2}(0,T;\dot{B}^{\frac{1}{3}}_{p+1,\infty})$, we deduce from Lemma \ref{lem2.3} that
\begin{equation}\label{4.4}
\|(v_{j}\theta)^{\varepsilon} -v_{j}^{\varepsilon}\theta^{\varepsilon}\|_{L^{\frac{r_1 r_2}{r_1+r_2}}(0,T;L^{\f{p+1}{2}}(\mathbb{R}^{2}))}\leq C\text{o}(\varepsilon^{\frac{2}{3}}).
\end{equation}
From Lemma \ref{lem2.2}, we infer that
\begin{equation}\label{4.5}
\|\partial_{j}\theta^{\varepsilon}\|_{L^{r_2}(0,T;L^{p+1} (\mathbb{R}^{2}))}\leq C\text{O}(\varepsilon^{-\frac{2}{3}}).
\end{equation}
According to the definition of Besov spaces, we have
\begin{equation}\label{4.6} \||\theta^{\varepsilon}|^{p-2}\|_{L^{p_{4}}(0,T;L^{\f{p+1}{p-2}} (\mathbb{R}^{2}))}\leq C \| \theta^{\varepsilon} \|^{p-2}_{L^{p_{4}(p-2)}(0,T;L^{ p+1 } (\mathbb{R}^{2}))} \leq C \| \theta  \|_{L^{r_{2}}(0,T;B^{\frac{1}{3}}_{p+1,\infty})}^{p-2},
\end{equation}
where we used $p_{4}(p-2)=r_{2}$, which means $\frac{p}{r_2}+\frac{1}{r_1}=1$ and $p\geq 2$.

 Then substituting \eqref{4.4}-\eqref{4.6} into \eqref{4.3} and letting $\varepsilon\rightarrow 0$, we have
 $$\B|\int^{t}_{0}\int_{\mathbb{R}^{2}}\big[(v_{j}\theta)^{\varepsilon} -v_{j}^{\varepsilon}\theta^{\varepsilon} \big]\partial_{i} \theta^{\varepsilon}| \theta^{\varepsilon}|^{p-2}dxds\B|\leq C\text{o}(1)\| \theta \|^{p-2}_{L^{p_{4}(p-2)}(0,T;B^{\frac{1}{3}}_{p+1,\infty})}\rightarrow 0.$$
Then we have completed the proof of Theorem \ref{the1.2}.
\end{proof}
\section{ General   helicity   conservation for 2-D surface quasi-geostrophic equations}
In this section, we are concerned with  the helicity conservation of weak solutions for 2-D generalized  surface quasi-geostrophic equation \eqref{gqg}. We also show two different approaches to prove Theorem \ref{the1.3}.
\begin{proof}[Proof of Theorem \ref{the1.3}]{\bf Approach 1: Littlewood-Paley theory}
First, due to the divergence free of velocity $v(x,t)$ and applying the operator $S_{N}$ to  the surface quasi-geostrophic equation \eqref{gqg}, we get
$$S_{N}\theta _{t}+S_{N}\partial_{j} (v_{j}\theta) =0,$$
and
$$\partial_{i}S_{N}\theta _{t}+\partial_{j}S_{N} (\partial_{i}v_{j}\theta) +\partial_{j} S_{N}(v_{j}\partial_{i}\theta) =0.$$
Straightforward calculations show that
\be\label{3.3}\ba
&\f{d}{dt}\int_{\mathbb{R}^2} S_{N}\theta \partial_{i}S_{N}\theta dx\\
=&\int_{\mathbb{R}^2} S_{N}\theta \partial_{i}\partial_{t}S_{N}\theta dx
+\int_{\mathbb{R}^2}\partial_{t}S_{N}\theta \partial_{i}S_{N}\theta dx\\
=&-\int_{\mathbb{R}^2} S_{N}\theta [\partial_{j} S_{N}(\partial_{i}v_{j}\theta)+\partial_{j} S_{N}(v_{j}\partial_{i}\theta) ]dx-\int_{\mathbb{R}^2} \partial_{j}S_{N} (v_{j}\theta) \partial_{i}S_{N}\theta dx,i=1,2.
\ea\ee
Thanks to $\int_{\mathbb{R}^2} \partial_{j} (\partial_{i}S_{N}v_{j}S_{N}\theta )S_{N}\theta dx=0$, we may write
$$\ba
&\f{d}{dt}\int_{\mathbb{R}^2} S_{N}\theta \partial_{i}S_{N}\theta dx\\
=&-\int_{\mathbb{R}^2} S_{N}\theta  \big[\partial_{j} S_{N}(\partial_{i}v_{j}\theta) -\partial_{j} (\partial_{i}S_{N}v_{j} S_{N}\theta \big] dx-\int_{\mathbb{R}^2} S_{N}\theta \big[\partial_{j} S_{N}(v_{j}\partial_{i}\theta)  -\partial_{j} (S_{N}v_{j} \partial_{i}S_{N}\theta )\big] dx\\&-\int_{\mathbb{R}^2} S_{N}\theta \partial_{j} (S_{N} v_{j} \partial_{i}S_{N}\theta ) dx  -\int_{\mathbb{R}^2}  \partial_{j}S_{N} (v_{j}\theta) \partial_{i}S_{N}\theta dx\\
=&\int_{\mathbb{R}^2}\partial_{j} S_{N}\theta \big [S_{N} (\partial_{i}v_{j}\theta) - (\partial_{i}S_{N}v_{j} S_{N}\theta )\big] dx+\int_{\mathbb{R}^2} \big[ S_{N}(v_{j}\partial_{i}\theta)  - (S_{N}v_{j} \partial_{i}S_{N}\theta)\big] \partial_{j}S_{N}\theta dx\\&-\int_{\mathbb{R}^2} S_{N}\theta S_N v_{j}\partial_{i}\partial_{j}S_{N}\theta dx  +\int_{\mathbb{R}^2} S_{N} (v_{j}\theta) \partial_{j}\partial_{i}S_{N}\theta dx\\
=&\int_{\mathbb{R}^2}\partial_{j} S_{N}\theta\big[S_{N} (\partial_{i}v_{j}\theta) - (\partial_{i}S_{N}v_{j} S_{N}\theta)\big] dx+\int_{\mathbb{R}^2}\big[ S_{N}(v_{j}\partial_{i}\theta)  - (S_{N}v_{j}\partial_{i}S_{N}\theta)\big] \partial_{j}S_{N}\theta dx\\&+\int_{\mathbb{R}^2}\big[S_{N}(v_{j}\theta)-S_{N}\theta S_{N}v_{j}\big]\partial_{i}\partial_{j}S_{N}\theta  dx\\
=&I+II+III.
\ea$$
To control $I$, we deduce from  the H\"older inequality that
$$
|I|\leq\|S_{N} (\partial_{i}v_{j}\theta) - (\partial_{i}S_{N}v_{j} S_{N}\theta) \|_{L^{\f65}} \|\partial_{j} S_{N}\theta\|_{L^{6}}.
$$
Taking advantage of Constantin-E-Titi identity
\eqref{ceti}, Minkowski inequality and the Sobolev inequality, we infer that
\begin{align}
&\|S_{N} (\partial_{i}v_{j}\theta) - (\partial_{i}S_{N}v_{j} S_{N}\theta) \|_{L^{\f65} (\mathbb{R}^{2})}\nonumber\\\leq& C2^{ 2N}\int_{\mathbb{R}^{2}}|\tilde{h}(2^{N}y)|\|\partial_{i} v_{j}(x-y)-\partial_{i}v_{j}(x) \|_{L^{\f32} (\mathbb{R}^{2}) }
 \| \theta(x-y)-\theta(x) \|_{L^{6}  (\mathbb{R}^{2})}dy\nonumber
\\&+C\|\partial_{i} v_{j}-S_{N}\partial_{i}v_{j} \|_{L^{\f32}  (\mathbb{R}^{2})} \| \theta-S_{N}\theta \|_{ L^{6} (\mathbb{R}^{2})}\nonumber \\\leq& C2^{ 2N}\int_{\mathbb{R}^{2}}|\tilde{h}(2^{N}y)|\|\partial_{i} v_{j}(x-y)-\partial_{i}v_{j}(x) \|_{L^{\f32} (\mathbb{R}^{2}) }
 \| \nabla\theta(x-y)-\nabla\theta(x) \|_{L^{\f32}  (\mathbb{R}^{2})}dy
\nonumber\\&+C\|\partial_{i} v_{j}-S_{N}\partial_{i}v_{j} \|_{L^{\f32}  (\mathbb{R}^{2})} \| \nabla\theta-\nabla S_{N}\theta \|_{L^{\f32} (\mathbb{R}^{2})}.\label{14.3}
\end{align}
Arguing in the same manner as \eqref{3.4},
we observe that
\begin{align}
&\|\partial_{i} v_{j}(x-y)-\partial_{i}v_{j}(x) \|_{L^{\f32} (\mathbb{R}^{2}) }\nonumber\\
\leq
  &C\B(\sum_{j\leq N}2^{j}|y|\|\dot{\Delta}_{j}\nabla v\|_{L^{\f32} (\mathbb{R}^{2})}+\sum_{j>N}\|\dot{\Delta}_{j}\nabla v\|_{L^{\f32} }\B)\nonumber \\
\leq&C\B( 2^{N(\gamma-\alpha)}|y|\sum_{j\leq N}2^{-(N-j)(\gamma-\alpha)}2^{j\alpha}
\|\dot{\Delta}_{j}\nabla\theta\|_{L^{\f32} (\mathbb{R}^{2})}\nonumber\\&+2^{(\gamma-1-\alpha )N}\sum_{j>N} 2^{(N-j)(\alpha+1-\gamma)} 2^{j\alpha}\|\dot{\Delta}_{j}\nabla\theta\|_{L^{\f32} (\mathbb{R}^{2}) }\B)\nonumber\\
\leq& C\left[ 2^{N(\gamma-\alpha)}|y| +2^{(\gamma-1-\alpha )N}\right]\left(K_{1}\ast \dot{\tilde{d}}_{j}\right)(N)\nonumber\\
\leq & C( 2^{N}|y| +1)2^{(\gamma-1-\alpha )N}\left(K_{1}\ast \dot{\tilde{d}}_{j}\right)(N).\label{1e4.4}
\end{align}
where $ K_{1}$ is defined in \eqref{K1} and
$\dot{\tilde{d}}_{j}=2^{j\alpha}\|\dot{\Delta}_{j}\nabla\theta\|_{L^{\frac{3}{2}}}.$

Similar to the   derivation of  \eqref{3.5} and using \eqref{K2}, we get
\be\ba\label{h4.5}
&\| \nabla\theta(x-y)-\nabla\theta(x) \|_{L^{\f32} (\mathbb{R}^{2})}\\
\leq &C\B(\sum_{j\leq N}2^{j}|y|\|\dot{\Delta}_{j}\nabla\theta\|_{L^{\f32}(\mathbb{R}^{2})}
+\sum_{j>N}\|\dot{\Delta}_{j}\nabla\theta\|_{L^{\f32}(\mathbb{R}^{2})}\B)\\
\leq&C\B( 2^{N(1-\alpha)}|y|\sum_{j\leq N}2^{-(N-j)(1-\alpha)}2^{j\alpha}\|\dot{\Delta}_{j}\nabla\theta\|_{L^{\f32} (\mathbb{R}^{2})}+2^{-\alpha N}\sum_{j>N} 2^{(N-j)\alpha} 2^{j\alpha}\|\dot{\Delta}_{j}\nabla\theta\|_{L^{\f32} (\mathbb{R}^{2})}\B)\\
\leq& C\B[ 2^{N(1-\alpha)}|y| +2^{ -\alpha N}\B]\left(K_{2}\ast \dot{\tilde{d}}\right)(N)\\
\leq& C(2^{N}|y| +1)2^{ -\alpha N}\left(K_{2}\ast \dot{\tilde{d}}_{j}\right)(N).
\ea\ee
  Some straightforward computations yields
\be\label{h4.6}\| \nabla\theta-\nabla S_{N}\theta \|_{L^{\f32}(\mathbb{R}^{2})} \leq C 2^{ -\alpha N}\left(K_{2}\ast \dot{\tilde{d}}_{j}\right)(N),
\ee
and
\be\label{h4.7}\ba \| \partial_{j} S_{N}\theta\|_{L^{\frac{3}{2}}(\mathbb{R}^{2})}&\leq C\|\nabla\partial_{j} S_{N}\theta\|_{L^{\frac{3}{2}}(\mathbb{R}^{2})}\leq  C \sum_{j\leq N}2^{j} \| \Delta_{j}\nabla\theta\|_{L^{\frac{3}{2}}(\mathbb{R}^{2})}\leq  2^{N(1-\alpha)}\left(K_{2}\ast \tilde{d}_{j}\right)(N),
\ea\ee
where the Sobolev embedding was used and
$\tilde{d}_{j}=2^{j\alpha}\|\Delta_{j}\nabla\theta\|_{L^{\frac{3}{2}} (\mathbb{R}^{2})}.$\\
As a consequence, we know
$$I\leq 2^{(\gamma-1-\alpha )N}\left(K_{1}\ast \dot{\tilde{d}}_{j}\right)(N)2^{ -\alpha N}\left(K_{2}\ast \dot{\tilde{d}}_{j}\right)(N) 2^{N(1-\alpha)}\left(K_{2}\ast \tilde{d}_{2}\right)(N).
$$
Repeating the deduction process of $I$, we have
$$II\leq 2^{(\gamma -3\alpha )N}\left(K_{1}\ast \dot{\tilde{d}}_{2}\right)(N) \left(K_{2}\ast \dot{\tilde{d}}_{2}\right)(N) \left(K_{2}\ast \tilde{d}_{2}\right)(N).
$$
Taking advantage of  the H\"older inequality, we infer  that
\be\label{h4.8}
III\leq \|S_{N}(v_{j}\theta)-S_{N}\theta S_{N}v_{j}\|_{L^{3} (\mathbb{R}^{2})}  \|\partial_{i}\partial_{j}S_{N}\theta \|_{L^{\f32} (\mathbb{R}^{2}) }.
\ee
Following the path of \eqref{1e4.4}, we arrive at
$$\ba
 &\|S_{N}(v_{j}\theta)-S_{N}\theta S_{N}v_{j}\|_{L^{3} (\mathbb{R}^{2})}\\
 \leq& C2^{ 2N}\int|\tilde{h}(2^{N}y)|  \|v_{j}(x-y)- v_{j}(x) \|_{L^{6}  (\mathbb{R}^{2})}
 \| \theta(x-y)-\theta(x) \|_{L^{6}  (\mathbb{R}^{2})}dy
\\&+C\|  v_{j}-S_{N} v_{j} \|_{L^{6}  (\mathbb{R}^{2})} \| \theta-S_{N}\theta \|_{ L^{6} (\mathbb{R}^{2})}\\
 \leq& C2^{ 2N}\int|\tilde{h}(2^{N}y)|  \|\nabla v_{j}(x-y)- \nabla v_{j}(x) \|_{L^{\f32}  (\mathbb{R}^{2})}
 \| \nabla\theta(x-y)-\nabla\theta(x) \|_{L^{\f32} (\mathbb{R}^{2}) }dy
\\&+C\|  \nabla v_{j}-S_{N} \nabla v_{j} \|_{L^{\f32} (\mathbb{R}^{2}) } \| \nabla\theta-\nabla S_{N}\theta \|_{ L^{\f32} (\mathbb{R}^{2})}.
 \ea$$
From \eqref{1e4.4}-\eqref{h4.5}, we have
\be\label{h4.9}
\|S_{N}(v_{j}\theta)-S_{N}\theta S_{N}v_{j}\|_{L^{3} (\mathbb{R}^{2})}
 \leq 2^{(\gamma-1-\alpha )N}\left(K_{1}\ast \dot{\tilde{d}}_{j}\right)(N)2^{ -\alpha N}\left(K_{2}\ast \dot{\tilde{d}}_{j}\right)(N). \ee
It follows from \eqref{h4.7} that
\be\label{h4.10}
\|\nabla\partial_{j} S_{N}\theta\|_{L^{\f32} (\mathbb{R}^{2})}\leq  C \sum_{j\leq N}2^{j} \| \Delta_{j}\nabla\theta\|_{L^{\f32}(\mathbb{R}^{2})}\leq  2^{N(1-\alpha)}\left(K_{2}\ast \tilde{d}_{j}\right)(N).
\ee
Substituting \eqref{h4.9}  \eqref{h4.10} into  \eqref{h4.8}, we conclude that
$$III\leq C 2^{(\gamma -3\alpha )N}\left(K_{1}\ast \dot{\tilde{d}}_{j}\right)(N) \left(K_{2}\ast \dot{\tilde{d}}_{j}\right)(N) \left(K_{2}\ast \tilde{d}_{j}\right)(N).
$$
Finally, we end up with
\begin{equation}
	\begin{aligned}
		\f{d}{dt}\int S_{N}\theta \partial_{i}S_{N}\theta dx&\leq C 2^{(\gamma -3\alpha )N}\left(K_{1}\ast \dot{\tilde{d}}_{j}\right)(N)\left( K_{2}\ast \dot{\tilde{d}}_{j}\right)(N) \left(K_{2}\ast \tilde{d}_{j}\right)(N).
	\end{aligned}
\end{equation}
At this stage, the rest proof of this theorem  is the same as the  one  of Theorem
\ref{the1.1}.
\end{proof}
\begin{proof}[Proof of Theorem \ref{the1.3}]{\bf Approach 2: Constantin-E-Titi type commutator estimates  in  physical Onsager type spaces}
  It is obvious that
$$\theta^{\varepsilon}_{t}+\partial_{j} (v_{j}\theta)^{\varepsilon}=0,$$
and
$$\partial_{i}\theta^{\varepsilon}_{t}+\partial_{j} (\partial_{i}v_{j}\theta)^{\varepsilon}+\partial_{j} (v_{j}\partial_{i}\theta)^{\varepsilon}=0.$$
Thus, it follows from the direct computation that
\be\label{h4.3}\ba
\f{d}{dt}\int_{\mathbb{R}^2}\theta^{\varepsilon}\partial_{i}\theta^{\varepsilon}dx
&=\int_{\mathbb{R}^2}\theta^{\varepsilon}\partial_{i}\partial_{t}\theta^{\varepsilon}dx
+\int_{\mathbb{R}^2}\partial_{t}\theta^{\varepsilon}\partial_{i}\theta^{\varepsilon}dx\\
&=-\int_{\mathbb{R}^2}\theta^{\varepsilon}[\partial_{j} (\partial_{i}v_{j}\theta)^{\varepsilon}+\partial_{j} (v_{j}\partial_{i}\theta)^{\varepsilon}]dx-\int_{\mathbb{R}^2} \partial_{j} (v_{j}\theta)^{\varepsilon}\partial_{i}\theta^{\varepsilon}dx,i=1,2.
\ea\ee
Since $\int_{\mathbb{R}^2} \partial_{j} (\partial_{i}v_{j}^{\varepsilon}\theta^{\varepsilon})\theta^{\varepsilon}dx=0$, we can rewrite the above equation \eqref{h4.3} as
$$\ba
\f{d}{dt}\int_{\mathbb{R}^{2}}\theta^{\varepsilon}\partial_{i}\theta^{\varepsilon}dx
=&-\int_{\mathbb{R}^{2}}\theta^{\varepsilon} \big[\partial_{j} (\partial_{i}v_{j}\theta)^{\varepsilon}-\partial_{j} (\partial_{i}v_{j}^{\varepsilon}\theta^{\varepsilon})\big] dx-\int_{\mathbb{R}^2}\theta^{\varepsilon}\big[\partial_{j} (v_{j}\partial_{i}\theta)^{\varepsilon} -\partial_{j} (v_{j}^{\varepsilon}\partial_{i}\theta^{\varepsilon})\big] dx\\&-\int_{\mathbb{R}^{2}}\theta^{\varepsilon}\partial_{j} (v_{j}^{\varepsilon}\partial_{i}\theta^{\varepsilon}) dx  -\int_{\mathbb{R}^{2}} \partial_{j} (v_{j}\theta)^{\varepsilon}\partial_{i}\theta^{\varepsilon}dx\\
=&\int_{\mathbb{R}^{2}}\partial_{j} \theta^{\varepsilon}\big [ (\partial_{i}v_{j}\theta)^{\varepsilon}- (\partial_{i}v_{j}^{\varepsilon}\theta^{\varepsilon})\big] dx+\int_{\mathbb{R}^{2}}\big[ (v_{j}\partial_{i}\theta)^{\varepsilon} - (v_{j}^{\varepsilon}\partial_{i}\theta^{\varepsilon})\big] \partial_{j}\theta^{\varepsilon} dx\\&-\int_{\mathbb{R}^{2}}\theta^{\varepsilon} v_{j}^{\varepsilon}\partial_{i}\partial_{j}\theta^{\varepsilon}  dx  +\int_{\mathbb{R}^{2}} (v_{j}\theta)^{\varepsilon}\partial_{j}\partial_{i}\theta^{\varepsilon}dx\\
=&\int_{\mathbb{R}^{2}}\partial_{j} \theta^{\varepsilon} \big[ (\partial_{i}v_{j}\theta)^{\varepsilon}- (\partial_{i}v_{j}^{\varepsilon}\theta^{\varepsilon})\big] dx+\int_{\mathbb{R}^2}\big[ (v_{j}\partial_{i}\theta)^{\varepsilon} - (v_{j}^{\varepsilon}\partial_{i}\theta^{\varepsilon})\big] \partial_{j}\theta^{\varepsilon} dx\\&+\int_{\mathbb{R}^{2}}\big[(v_{j}\theta)^{\varepsilon}-\theta^{\varepsilon} v_{j}^{\varepsilon}\big]\partial_{i}\partial_{j}\theta^{\varepsilon}  dx,
\ea$$
which implies
$$\ba
&\int_{\mathbb{R}^{2}}\theta^{\varepsilon}(x,t)\partial_{i}\theta^{\varepsilon}(x,t)dx
-\int_{\mathbb{R}^{2}}\theta^{\varepsilon}(x,0)\partial_{i}\theta^{\varepsilon}(x,0)dx
\\
= &\int_{0}^{t}\int_{\mathbb{R}^{2}}\partial_{j} \theta^{\varepsilon}\big [ (\partial_{i}v_{j}\theta)^{\varepsilon}- (\partial_{i}v_{j}^{\varepsilon}\theta^{\varepsilon})\big] dxds+\int_{0}^{t}\int_{\mathbb{R}^{2}}\big[ (v_{j}\partial_{i}\theta)^{\varepsilon} - (v_{j}^{\varepsilon}\partial_{i}\theta^{\varepsilon})\big] \partial_{j}\theta^{\varepsilon} dxds\\&+\int_{0}^{t}\int_{\mathbb{R}^{2}}\big[(v_{j}\theta)^{\varepsilon}-\theta^{\varepsilon} v_{j}^{\varepsilon}\big]\partial_{i}\partial_{j}\theta^{\varepsilon}  dxds  \\
=&
I+II+III.
\ea$$
Taking advantage of  the H\"older inequality, we get
\be\label{h4.11}
|I|\leq\|(\partial_{i}v_{j}\theta)^{\varepsilon}- (\partial_{i}v_{j}^{\varepsilon}\theta^{\varepsilon}) \|_{L^{\f32}(0,T;L^{\f65}(\mathbb{R}^{2}))} \| \partial_{j} \theta^{\varepsilon}\|_{L^{3}(0,T;L^{6}(\mathbb{R}^{2}))}.
\ee
Due to the hypothesis $ \nabla\theta\in L^{3}(0,T;\dot{B}^{\alpha}_{\frac{3}{2},c(\mathbb{N})})$ and the boundedness of Riesz transforms in homogeneous Besov spaces, we obtain
that
$$\nabla v=\mathcal{R}^{\perp}\Lambda^{\gamma-1}\nabla\theta\in   L^{3}(0,T;\dot{B}^{\alpha-\gamma+1}_{\frac{3}{2},c(\mathbb{N})}).$$
Then we employ (3) in Lemma \ref{lem2.3}
with
$q=\f65,d=2, q_{1}=\f32, q_{2}=\f32<2 $ and use
$\nabla\theta\in L^{3}(0,T;\dot{B}^{\alpha}_{ \f32, c(\mathbb{N})})$ and  $\nabla v\in L^{3}(0,T;\dot{B}^{\alpha-\gamma+1}_{ \f32, c(\mathbb{N})})$ to derive that
\be\label{h4.12}
\|(\partial_{i}v_{j}\theta)^{\varepsilon}- (\partial_{i}v_{j}^{\varepsilon}\theta^{\varepsilon}) \|_{L^{\f32}(0,T;L^{\f65} (\mathbb{R}^{2}))}
\leq \text{o}(\varepsilon^{2\alpha-\gamma+1}),\ee
where $ \f12+\f56=\frac{1}{q_{1}}+\frac{1}{q_{2}}$  and  $0<\alpha-\gamma+1<1$ were utilized.
\\
By means of
Sobolev embedding theorem, Lemma \ref{lem2.2} and $\nabla\theta\in L^{3}(0,T;\dot{B}^{\alpha}_{ \f32, c(\mathbb{N})}) $, we conclude that
\be\label{h4.13}
\| \partial_{j} \theta^{\varepsilon}\|_{L^{3}(0,T;L^{6} (\mathbb{R}^{2}))}\leq C\|\nabla \partial_{j} \theta^{\varepsilon}\|_{L^{3}(0,T;L^{\f32} (\mathbb{R}^{2}))}\leq C \text{o}(\varepsilon^{\alpha-1} ).
\ee
Plugging \eqref{h4.12} and \eqref{h4.13} into \eqref{h4.11}, we arrive at
$$
|I|\leq C \text{o}(\varepsilon^{3\alpha-\gamma}).
$$
Arguing as above, we deduce that
$$
|II|\leq C \text{o}(\varepsilon^{3\alpha-\gamma}).
$$
It is enough  to estimate the term $III$. Applying the H\"older inequality once again, we get
\be\label{h4.14}
|III|\leq  \|(v_{j}\theta)^{\varepsilon}-\theta^{\varepsilon} v_{j}^{\varepsilon} \|_{L^{\f32}(0,T;L^{3} (\mathbb{R}^{2}))} \| \partial_{i}\partial_{j}\theta^{\varepsilon}\|_{L^{3}(0,T;L^{\f32} (\mathbb{R}^{2}))}.
\ee
Invoking (2) in Lemma \ref{lem2.3}
with
$q=3,d=2, q_{1}=\f32=q_{2}=\f32<2,  $  we find
\be\label{h4.15}
 \|(v_{j}\theta)^{\varepsilon}-\theta^{\varepsilon} v_{j}^{\varepsilon} \|_{L^{\f32}(0,T;L^{3} (\mathbb{R}^{2}))}\leq C \text{o}(\varepsilon^{2\alpha-\gamma+1}),
 \ee
where we used $\nabla\theta\in L^{3}(0,T;\dot{B}^{\alpha}_{ \f32, c(\mathbb{N})})$ and $\nabla v\in L^{3}(0,T;\dot{B}^{\alpha-\gamma+1}_{ \f32, c(\mathbb{N})})$.

In the light of  Lemma \ref{lem2.2}, we infer that
\be\label{h4.16}
\| \partial_{i}\partial_{j}\theta^{\varepsilon}\|_{L^{3}(0,T;L^{\f32} (\mathbb{R}^{2}))}\leq C \text{o}(\varepsilon^{\alpha-1} ).
\ee
Substituting
\eqref{h4.15} and \eqref{h4.16} into \eqref{h4.14},  we see that
$$|III|\leq C \text{o}(\varepsilon^{3\alpha-\gamma}).$$
Since we need $0<\alpha-\gamma+1<1$, we discuss in two cases   $0<\gamma <\frac{3}{2}$  and  $\frac{3}{2}\leq\gamma <2$ as Theorem \ref{the1.1}.   This enables us to complete the proof.
\end{proof}

\section{Conclusion}
We apply the Littlewood-Paley theory as \cite{[CCFS]} and the
Constantin-E-Titi type commutator estimates  in  physical Onsager type spaces
 to study the   energy (helicity) conservation of weak solutions for  the 2-D generalized    quasi-geostrophic equation with the velocity $v$ determined by $v=\mathcal{R}^{\perp}\Lambda^{\gamma-1}\theta$ with $0<\gamma<2$, respectively. For
 the case $0<\gamma<\f32$, the   sufficient conditions for the energy (helicity) conservation of weak solutions of this equation   in Onsager's critical space  are derived. For the more singular case $ \f32\leq\gamma<2$, we obtain the corresponding results in critical spaces.
 Since the Littlewood-Paley decomposition and Besov space     and Lemma \ref{lem2.2} and \ref{lem2.3} are known for periodic domain,   the main results are also valid for periodic case.

 A natural question is to extend our results to other models which modifies the velocity.  A  possible candidate is the inviscid Leary-$\alpha$ or Euler-$\alpha$  system. After we completed the main part of this paper, we learned the energy conservation of these models recently   studied by Boutros-Titi in \cite{[BT]}   and         Beekie-Novack in \cite{[BN]}. Compared with their results, the results here give how the critical regularity for the energy   conservation of the weak solutions depends on the
 the parameter $\alpha$ of the velocity.

The
non-uniqueness of weak solutions to the standard  surface quasi-geostrophic equation \eqref{qg1} can be found in \cite{[BSV],[IM]}.
It would be interesting to  show
 the weak solutions to the generalized    quasi-geostrophic  equation \eqref{gqg}
   are not unique.

\section*{Acknowledgement}

 Wang was partially supported by  the National Natural
 Science Foundation of China under grant (No. 11971446, No. 12071113   and  No.  11601492). Ye was partially supported by the National Natural Science Foundation of China  under grant (No.11701145) and China Postdoctoral Science Foundation (No. 2020M672196). Yu was partially supported by the
National Natural Science Foundation of China (NNSFC) (No. 11901040), Beijing Natural
Science Foundation (BNSF) (No. 1204030) and Beijing Municipal Education Commission
(KM202011232020).

\end{document}